\tikzset{
    nattrans/.style={
        -Implies,
        double distance=2pt
    },
    onattrans/.style={
        decoration={
            markings,
            mark=at position 0.5 with {\draw (0,0) circle (0.2em);}},
        -Implies,
        double distance=2pt,
        postaction={decorate}  
    },
}
\newcommand{\rightarrowcircle}{}
\DeclareRobustCommand{\rightarrowcircle}{%
  \mathrel{\vphantom{\rightarrow}\mathpalette\circle@arrow\relax}%
}
\newcommand{\circle@arrow}[2]{%
  \m@th
  \ooalign{%
    \hidewidth$#1\circ\mkern1mu$\hidewidth\cr
    $#1\longrightarrow$\cr}%
}
\newcommand{\Rightarrowcircle}{}
\DeclareRobustCommand{\Rightarrowcircle}{%
  \mathrel{\vphantom{\Rightarrow}\mathpalette\Circle@arrow\relax}%
}
\newcommand{\Circle@arrow}[2]{%
  \m@th
  \ooalign{%
    \hidewidth$#1\circ\mkern1mu$\hidewidth\cr
    $#1\Longrightarrow$\cr}%
}
\title{The~Bicategory~of~Open~Functors}
\author{Alexandre Fernandez}
\author{Luidnel Maignan}
\author{Antoine Spicher}
\affil{Univ Paris Est Creteil, LACL, 94000, Creteil, France}
\newtheorem{definition}{Definition}
\newtheorem{proposition}[definition]{Proposition}
\theoremstyle{remark}
\newtheorem{remark}[definition]{Remark}
\begin{document}

\newcommand\Set{\ensuremath{\textsc{Set}}}
\newcommand\Cat{\ensuremath{\textsc{Cat}}}
\newcommand\CAT{\ensuremath{\textsc{CAT}}}
\newcommand\OCat{\ensuremath{\textsc{OCat}}}
\newcommand\A{\ensuremath{\mathcal{A}}}
\newcommand\B{\ensuremath{\mathcal{B}}}
\newcommand\C{\ensuremath{\mathcal{C}}}
\newcommand\D{\ensuremath{\mathcal{D}}}
\newcommand\E{\ensuremath{\mathcal{E}}}
\newcommand\na{\ensuremath{\mathsf{a}}}
\newcommand\nl{\ensuremath{\mathsf{lu}}}
\newcommand\nr{\ensuremath{\mathsf{ru}}}
\renewcommand\ni{\ensuremath{\mathsf{i}}}

\newcommand\op{\ensuremath{\mathrm{op}}}
\newcommand\po{\ensuremath{\mathrm{po}}}
\newcommand\Arr{\ensuremath{\mathrm{Arr}}}
\newcommand\OFun[2]{\ensuremath{\llbracket#1,#2\rrbracket}}
\newcommand\ONat[2]{\ensuremath{\llbracket#1,#2\rrbracket}}
\newcommand\Alt[1]{\ensuremath{{#1}_\alpha}}
\newcommand\Res[1]{\ensuremath{{#1}_\beta}}
\newcommand\alt[1]{\ensuremath{{#1}_\gamma}}
\newcommand\res[1]{\ensuremath{{#1}_\delta}}
\newcommand\El[1]{\ensuremath{{\smallint}{#1}}}
\newcommand\id[1]{\ensuremath{\mathrm{id}[{#1}]}}
\newcommand\Id[1]{\ensuremath{\mathrm{Id}[{#1}]}}
\newcommand\ID[1]{\ensuremath{\iota[{#1}]}}
\newcommand\Open{\ensuremath{\mathbf{O}}}
\newcommand\free[1]{\ensuremath{\overline{#1}}}
\newcommand\opto{\ensuremath{\,\begin{tikz}\draw[-<] (0,0) -- (1em,0);\end{tikz}\,}}
\newcommand\oto{\ensuremath{\rightarrowcircle}}
\newcommand\oTo{\ensuremath{\Rightarrowcircle}}
\newcommand\To{\ensuremath{\Longrightarrow}}

\newcommand\eg{\emph{e.g.} }
\newcommand\ie{\emph{i.e.} }

\newcommand\mnote[1]{\textcolor{red}{#1}}

\maketitle

\begin{abstract}
    We want to replace categories, functors and natural transformations by categories, \emph{open} functors and \emph{open} natural transformations.
    In analogy with \emph{open} dynamical systems, the adjective \emph{open} is added here to mean that some external information is taken into account.
    For the particular use of the authors, such an open functor is described by two components: a presheaf representing the possible external influences for each input, and a classical functor from the category of elements of this presheaf to the category of results.
    Considering the appropriate notion of composition then leads to a bicategory.
    This report describes this bicategory with as little auxiliary constructions as possible and gives all the details of all the proofs needed to establish the bicategory, as explicitly as possible.
    Subsequent reports will give other presentations of this bicategory and compare it to other existing constructions, \eg spans, fibrations, pseudo-adjunctions, Kleisli bicategories of pseudo-monads, and profunctors (or distributors).
\end{abstract}

\section{Introduction}

We want to replace categories, functors and natural transformations by categories, \emph{open} functors and \emph{open} natural transformations.
In analogy with \emph{open} dynamical systems, the adjective \emph{open} is added here to mean that some external information is taken into account.

The origin of this will is that we found ourselve in a case where we needed to generalize the powerset monad $\mathrm{P} : \Set \to \Set$ in a setting where sets are replaced by categories.
More precisely, the goal was to represent a ``local definition to global definition'' phenomenon as a pointwise Kan extension as we usually do when thinking of spatially extended dynamical systems in terms of \emph{global transformations} (\cite{DBLP:conf/gg/MaignanS15,DBLP:conf/uc/FernandezMS19}), except that, this time, some non-determinism came into play.
The first tries was to replace objects by sets of objects and morphisms by some sort of sets of morphisms.
However, we found convincing examples showing that multiplicities are important, and these multiplicities come from the different ways and reasons why a result could be obtained.
These ``reasons'' form the aforementioned external information,
not present in the input, but nonetheless taken into account to produce the result.
This external information can be the result of a choice, either from a random source, from an external scheduler, or simply from another part of the system that is not modeled but just taken as secondary input instead (\cite{DBLP:conf/saso/MaignanG08, DBLP:journals/ppl/MaignanG09,DBLP:conf/acri/MaignanG10,DBLP:journals/fuin/ArrighiST13}).

Although it seemed clear that some settings should already exist to model this, there were questions on small details in each possible setting.
In such a case, it is a good idea to first formalize the intuition as directly as possible, and only then to try to map it in other settings in a precise formal way to fix those details.
This is exactly the purpose of this work.
In this report, we only present open functors and open natural transformations as directly as possible, with as little reference to other auxiliary constructions as possible.
Proofs are also made at a very elementary and explicit level for various reasons, including the checking of some details, the use of some parts as exercises, and the accessibility for some intented audience.
Also we do not spend time on any foundational issues about sets of sets or categories of categories.
The rest of the work should come as additional reports hopefully, and we will try to update this report to link the different reports together.
A short description of the remaining content is given at the end.

\section{Background and Notations}

Let us compile here all the categorical definitions manipulated in this report, in order to make the proofs easier to check and to fix the notations.
Note that we did our best to indicate every time a definition or proposition of the report is used.
So if a step of proof as no indication, its justification should be in this section.
For more information, one can consult \cite{maclane2013categories,borceux2008handbook,Benabou1967bicat}.

\subsection{The Strict 2-Category of Categories}

We denote by $\Cat$ the strict 2-category of categories, functors and natural transformations.

By default, the composition of two arrows $f : c \to c'$ and $g : c' \to c''$ in a category $\C$ is denoted $g \cdot f : c \to c''$, and the identity arrow of an object $c \in \C$ is denoted $\id{c} : c \to c$.
The collection of arrows from an object $c$ to an object $c'$ is denoted $[c,c']$.
In these notations, the category $\C$ is left implicit but can be retrieve from the context.

The application of a functor $F$ on an object $c$ and an arrow $f$ are denoted $F(c)$ and $F(f)$ respectively.
Also, $\Id{\C} : \C \to \C$ is the identity functor of the category $\C$ and $G \circ F : \C \to \E$ the composition of two functors $F : \C \to \D$ and $G : \D \to \E$.

The component at $c \in \C$ of a natural transformation $\theta : F \To G : \C \to \D$ is denoted $\theta[c] : F(c) \to G(c)$.
We also use the ``arrow component'' notation, \ie for any $f : c \to c'$, $\theta[f] : F(c) \to G(c')$ is defined as $\theta[f] = G(f) \cdot \theta[c] = \theta[c'] \cdot F(f)$.
Also, $\ID{F} : F \To F : \C \to \D$ is the identity natural transformation of a functor $F : \C \to \D$, $\phi \bullet \theta : F \to H : \C \to \D$ the vertical composition of $\theta : F \To G : \C \to \D$ and $\phi : G \To H : \C \to \D$, and $\phi \circ \theta : G \circ F \to G' \circ F' : \C \to \E$ the horizontal composition of $\theta : F \To F' : \C \to \D$ and $\phi : G \To G' : \D \to \E$.
With our notations, the components of these compositions are $(\phi \bullet \theta)[c] = \phi[c] \cdot \theta[c]$ and $(\phi \circ \theta)[c] = \phi[\theta[c]]$ for any $c \in \C$.
Also $(\phi \bullet \theta)[g\cdot f] = \phi[g] \cdot \theta[f]$ and $(\phi \circ \theta)[f] = \phi[\theta[f]]$ for any $f : c \to c' \in \C$.
We finally recall the exchange law $(\phi' \bullet \phi) \circ (\theta' \bullet \theta) = (\phi' \circ \theta') \bullet (\phi \circ \theta)$ and its special case $(\phi' \bullet \phi) \circ \ID{F} = (\phi' \circ \ID{F}) \bullet (\phi \circ \ID{F})$.
Recall that the exchange law only expresses that composition of functors is functorial.

Note that we use brackets where the literature typically uses subscripts because we need lots of nestings, \eg $\psi[\phi[\theta[c]]]$ instead of $\psi_{\phi_{\theta_c}}$ (and this is a simple instance).
Also, the ``arrow component'' notation is used as it is practical not to choose between equal compositions, \eg for some $\theta : F \To F'$, $\phi : G \To G'$, $\psi : H \To H'$, we write $\psi[\phi[\theta[c]]]$ instead of $\psi[G'(F'(c))] \cdot H(\phi[F'(c)]) \cdot H(G(\theta[c]))$ or $H'(\phi[F'(c)]) \cdot \psi[G(F'(c))] \cdot H(G(\theta[c]))$ or $\psi[G'(F'(c))] \cdot H(G'(\theta[c])) \cdot H(\phi[F(c)])$ or $H'(G'(\theta[c])) \cdot \phi[G'(F(c))] \cdot H(\phi[F(c)])$ or $H'(\phi[F'(c)]) \cdot H'(G(\theta[c])) \cdot \psi[G(F(c))]$ or $H'(G'(\theta[c])) \cdot H'(\phi[F(c)]) \cdot \psi[G(F(c))]$.
Diagrammatically, this particular example amounts to consider the naturality cube induced by the three natural transformations and to link directly the initial corner to the terminal corner by naming the diagonal arrow, thanks to the arrow component notation, instead of expressing it by one of the six paths along the edges.
This notation also makes some reasoning easier to write, \eg the definition of horizontal composition of natural transformations.
This lengthy paragraph is required as it prepare for the proofs below.

\subsection{The Category of Sets, its Opposite and Elements}

We denote by $\Set$ the category of sets and functions, and denote by $g \circ f$ the composition of two functions $f : X \to Y$ and $g : Y \to Z$.
Its opposite category is denoted $\Set^\op$, its objects are simple sets, but an arrow $f$ from $Y$ to $X$ in $\Set^\op$ is denoted $f : Y \opto X$ and its associated function $f^\po : X \to Y$.
This is done to reduce the risk of a disordering.
Given a functor $F : \C \to \Set^\op$, we denote by $\El{F}$ its category of elements whose objects are of the form $(c, x)$ with $c \in \C$ and $x \in F(c)$, and arrows of the form $(f, x') : (c, F(f)^\po(x')) \to (c',x')$ with $f : c \to c'$ and $x' \in F(c')$.
The composition in $\El{F}$ is $(g, x'') \cdot (f, x') := (g \cdot f, x'')$ (with $x' = F(g)^\po(x'')$ for arrows to be composable) and the identity arrows $\id{c, x} := (\id{c}, x)$.
Given another functor $G : \C \to \Set^\op$ and a natural transformation $\theta : F \To G$, we denote by $(\El{\theta})^\po : \El{G} \to \El{F}$ the functor defined as $(\El{\theta})^\po(c,y) = (c,\theta[c]^\po(y))$ and $(\El{\theta})^\po(f,y') = (f,\theta[c']^\po(y'))$ with $f : c \to c'$.
We recall also that for any fixed category $\C$, categories of elements and functors of elements form a functor $\El{\--}$ from $[\C,\Set^\op]$ to $\Cat^\op$.

\begin{center}
\begin{tikzpicture}
    \node (c)     at (0,0) {$\C$};
    \node (setop) at (3,0) {$\Set^\op$} ;

    \draw[->](c) to[bend left=+50] coordinate(f) node[above]{$F$}(setop);
    \draw[->](c) to[bend left=-50] coordinate(g) node[below]{$G$}(setop);

    \node (elf) at (4,+1) {$\El{F}$};
    \node (elg) at (4,-1) {$\El{G}$};

    \draw[nattrans,shorten <=5pt,shorten >=5pt] (f) to node[right] {$\theta$} (g);
    
    \draw[<-] (elf) to node[right]{$(\El{\theta})^\po$} (elg);

    \node (x) at (5.6,+1) {$(c,\theta[c]^\po(y))$};
    \node (xx) at (10.8,+1) {$(c',\theta[c']^\po(y'))$};
    \draw[->] (x) to node[fill=white,fill opacity=0.8,text opacity=1](xxx){$(f,\theta[c']^\po(y'))$} (xx);
    
    \node (y) at (5.6,-1) {$(c,y)$};
    \node (yy) at (10.8,-1) {$(c',y')$};
    \draw[->] (y) to node[fill=white,fill opacity=0.8,text opacity=1](yyy){$(f,y')$} (yy);

    \draw[<-|,shorten <=9pt,shorten >=9pt] (x) to node[right] {$(\El{\theta})^\po$} (y);
    \draw[<-|,shorten <=9pt,shorten >=9pt] (xx) to node[right] {$(\El{\theta})^\po$} (yy);
    \draw[<-|,shorten <=9pt,shorten >=9pt] (xxx) to node[right] {$(\El{\theta})^\po$} (yyy);

\end{tikzpicture}
\end{center}

Note that we write $\id{c, x}$ and not $\id{(c, x)}$, and we write $F(c, x)$ and not $F((c, x))$ (as done for $(\El{\theta})^\po(c,y)$ above).
This shorthand is only used for any application on objects or arrows from a category of elements.
For an arbitrary pair, we will denote $\langle x,y \rangle$ and write $F(\langle x,y \rangle)$.
To make things more readable, we rarely recall the domains and codomains of any arrow $(f, x')$.
Our experience is that these cumbersome data obscure things with too much details that are easier to retrieve than to read.
Finally, the common usage is to use $[\C^\op,\Set]$ instead of $[\C,\Set^\op]$ but we find it easier to switch convention for the particular needs of this work, so that the arrows of $\C$ are always used in the same direction during composition of open functors below, where the codomain of an open functor is also used as the domain of the following functor.

\subsection{Bicategories}

A bicategory, a notion introduced by \cite{Benabou1967bicat}, is almost like a category, except that between two objects there is not just a collection of arrows but a category of arrows.
Similarly, the composition is not simply a function but a functor.
If it is still associative, left unital and right unital for the identities, we have a strict 2-category, \eg \Cat.
But since arrows form categories, we can have different but isomorphic arrows, allowing to consider compositions that are associative, left unital, and right unital only up to isomorphisms, as long as the chosen isomorphisms are coherent.
Here, coherence means that the two ways to transform $(i \circ (h \circ (g \circ f)))$ into $(((i \circ h) \circ g) \circ f)$ should be equal, and similarly for the two transformations from $(g \circ (\id{c'} \circ f))$ to $g \circ f$.

More formally, a bicategory is composed of
(1) a collection of 0-cells,
(2) for each pair of 0-cells $c, c'$ a category $(c \to c')$ whose objects are called 1-cells, and arrows 2-cells
(3) for each 0-cell $c$ an identity 1-cell $\ni[c] : c \to c$,
(4) for each triplet of 0-cell $c, c', c''$ a composition functor $\circ_{c,c',c''}$ from $(c'\to c'') \times (c \to c')$ to $(c\to c'')$,
(5) for each pair of 0-cell $c, c'$ a natural isomorphism $\nl_{c,c'}$, called left unitor, from the functor $(\ni[c'] \circ \--)$ to the identity functor $(\--)$ both from $(c \to c')$ to $(c \to c')$
(6) for each pair of 0-cell $c, c'$ a natural isomorphism $\nr_{c,c'}$, called right unitor, from functor $(\-- \circ \ni[c])$ to identity functor $(\--)$ both from $(c \to c')$ to $(c \to c')$
(7) for each quadruplet of 0-cell $c, c', c'', c'''$ a natural isomorphism $\na_{c,c',c'',c'''}$, called associator, from functor $(\-- \circ (\-- \circ \--))$ to functor $((\-- \circ \--) \circ \--)$ both from $(c'' \to c''') \times (c' \to c'') \times (c \to c')$ to $(c \to c''')$.
These data should be coherent in the sense that this pentagon diagram in $(c\to c'''')$ and triangle diagram in $(c \to c'')$ should commute.
\begin{center}
\begin{tikzpicture}
\node (a) at (0,3) {$i \circ (h \circ (g \circ f))$};
\node (b) at (0,1.5) {$(i \circ h) \circ (g \circ f)$};
\node (c) at (8,3) {$i \circ ((h \circ g) \circ f)$};
\node (d) at (8,1.5) {$(i \circ (h \circ g)) \circ f$};
\node (e) at (4,0) {$((i \circ h) \circ g) \circ f$};
\draw[->] (a) to node[right]{$\na[g\circ f,h,i]$} (b);
\draw[->] (b) to node[below left]{$\na[f,g,i \circ h]$} (e);
\draw[->] (a) to node[above]{$\id{i}\circ \na[f,g,h]$} (c);
\draw[->] (c) to node[left]{$\na[f,h \circ g,i]$} (d);
\draw[->] (d) to node[below right]{$\na[i,h,g] \circ \id{f}$} (e);
\end{tikzpicture}
\end{center}

\begin{center}
\begin{tikzpicture}
\node (a) at (0,1.5) {$g \circ (\ni[c] \circ f)$};
\node (b) at (8,1.5) {$(g \circ \ni[c]) \circ f$};
\node (c) at (4,0) {$g \circ f$};
\draw[->] (a) to node[above]{$\na[f,\ni[c],g]$} (b);
\draw[->] (a) to node[below left]{$\id{g} \circ \nl[f]$} (c);
\draw[->] (b) to node[below right]{$\nr[g] \circ \id{f}$} (c);
\end{tikzpicture}
\end{center}
The vocabulary ``0-cell'', ``1-cell'', ``2-cell'' is only used here to make the previous paragraph readable.
Below, 0-cells are categories, 1-cells are open functors and 2-cells are open natural transformations.

\section{The Bicategory of Open Functors}

\begin{remark}
    Given two categories $\C$ and $\D$, an open functor $F$ from $\C$ to $\D$ needs to describe for each input $c\in\C$ the set of possible external interactions available for $c$.
    In all examples considered up to now by the authors, it is the case that for each $f : c \to c'$, there is an associated ``restriction'' map in the other direction, namely from the set of interactions available for $c'$ into the set of those available for $c$.
    The reason is that, in the considered examples, $c$ can be viewed as a part of $c'$ through $f$, so that each external interaction on the whole of $c'$ can be restricted to an interaction $c$ alone.
    We therefore have a functor $\Alt{F} : \C \to \Set^\op$ describing these interactions and restrictions and its category of elements $\El{\Alt{F}}$ represents the complete information, inputs and their external information/interactions.
    From this ``complete'' category, the data of a classical functor $\Res{F} : \El{\Alt{F}} \to \D$ finishes the description of the open functor $F$.
\end{remark}
\begin{definition}\label{def:open-functor}
    Given two categories $\C$ and $\D$, an \emph{open functor} $F$ \emph{from $\C$ to $\D$}, denoted $F : \C \oto \D$, is given by two functors $\Alt{F} : \C \to \Set^\op$ and $\Res{F} : \El{\Alt{F}} \to \D$, as in the following diagram.
    \begin{center}
    \begin{tikzpicture}[xscale=0.75,every node/.style={fill=white,fill opacity=0.8,text opacity=1}]
        \node (c)     at (0,0)   {$\C$};
        \node (setop) at (3,0)   {$\Set^\op$};
        \node (d)     at (7.5,0) {$\D$};

        \draw[->] (c) to (setop);
        \node (f) at (1.2,0) {$\Alt{F}$};
        \node (elf) at (4.5,0) {$\El{\Alt{F}}$};
        \draw[->] (elf) to (d);
        \node (rf) at (6,0) {$\Res{F}$};
    \end{tikzpicture}
    \end{center}
    We denote $\OFun{\C}{\D}$ the collection of all open functors from $\C$ to $\D$.
\end{definition}

\begin{remark}\label{rem:readable1}
    Given two objects $c, c' \in \C$, an arrow $f : c \to c'$, and an element $x' \in \Alt{F}(c')$, the arrow $(f, x') : (c,\Alt{F}(f)^\po(x')) \to (c',x')$ in the category of elements $\El{\Alt{F}}$ is sent to $\Res{F}(f,x') : \Res{F}(c,\Alt{F}(f)^\po(x')) \to \Res{F}(c',x')$.
    We do not recall systematically those domains and codomains below.
\end{remark}

\begin{definition}\label{def:open-identity-functor}
    The \emph{identity open functor} $\Id{\C} : \C \oto \C$ of a category $\C$ is given by $\Alt{\Id{\C}} = (c \mapsto \{ \star \}, f \mapsto \id{\{\star\}}^\op)$, and $\Res{\Id{\C}} = ((c,\star) \mapsto c,\; (f,\star) \mapsto f)$.
\end{definition}

\begin{remark}
    More generally, any classical functor can be turned into an open functor by setting no external information, \ie a singleton set for every input, as done in this last definition for the classical identity functor.
    We leave this idea for a later report.
\end{remark}

Given two open functors $F$ and $G$, their composition $G\circ F$ has a $\Alt{G\circ F}$ component corresponding of pairs of the form $\langle x, y \rangle$ with $x$ taken in $\Alt{F}$ and $y$ taken in $\Alt{G}$ in the appropriate manner so that $\Res{(G\circ F)}(c,\langle x,y \rangle) = \Res{G}(\Res{F}(c,x),y)$.
\begin{definition}\label{def:open-composition}
    Given three categories $\C$, $\D$ and $\E$ and two open functors $F : \C \oto \D$ and $G : \D \oto \E$, as in the following diagram,
    \begin{center}
    \begin{tikzpicture}[xscale=0.75,every node/.style={fill=white,fill opacity=0.8,text opacity=1}]
        \node (c)     at (0,0)   {$\C$};
        \node (setop) at (3,0)   {$\Set^\op$};
        \node (d)     at (7.5,0) {$\D$};
        \node (setop2) at (10.5,0)   {$\Set^\op$};
        \node (e)     at (15,0) {$\E$};
    
        \draw[->] (c) to (setop);
        \node (f) at (1.2,0) {$\Alt{F}$};
        \node (elf) at (4.5,0) {$\El{\Alt{F}}$};
        \draw[->] (elf) to (d);
        \node (rf) at (6,0) {$\Res{F}$};
        
        \draw[->] (d) to (setop2);
        \node (g) at (8.7,0) {$\Alt{G}$};
        \node (elg) at (12,0) {$\El{\Alt{G}}$};
        \draw[->] (elg) to (e);
        \node (rg) at (13.5,0) {$\Res{G}$};
    \end{tikzpicture}
    \end{center}
    their composition $G \circ F : \C \oto \D$ is given by:
    \begin{eqnarray*}
        &&\Alt{(G\circ F)}(c) = \coprod_{x \in \Alt{F}(c)}(\Alt{G} \circ \Res{F})(c,x), \\
        &&\Alt{(G\circ F)}(f)^\po(\langle x', y'\rangle) = \langle \Alt{F}(f)^\po(x'),(\Alt{G} \circ \Res{F})(f,x')^\po(y') \rangle, \\
        &&\Res{(G\circ F)}(c,\langle x,y \rangle) = \Res{G}(\Res{F}(c,x),y), \text{ and}\\
        &&\Res{(G\circ F)}(f,\langle x',y' \rangle) = \Res{G}(\Res{F}(f,x'),y')
    \end{eqnarray*}
    for any $c,c' \in \C$, $f : c \to c' \in \C$, $x \in \Alt{F}(c)$, $x' \in \Alt{F}(c')$ such that $x = \Alt{F}(f)^\po(x')$, $y \in (\Alt{G} \circ \Res{F})(c,x)$ and 
    $y' \in (\Alt{G} \circ \Res{F})(c,x')$ such that $y = \Alt{G}(\Res{F}(f,x'))^\po(y')$.
\end{definition}

\begin{proposition}
    All identity open functors and all open functor compositions are indeed open functors.
\end{proposition}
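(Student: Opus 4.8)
The plan is to verify each clause separately, in both cases reducing the claim to the statement that the two pieces of data are genuine functors with the declared domains and codomains.

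\textbf{Identity open functors.} First I would observe that $\Alt{\Id{\C}}$ is the constant functor at the one-element set: every arrow of $\C$ is sent to the identity arrow of $\{\star\}$ in $\Set^\op$, which is its own composite with itself and is an identity, so functoriality is immediate. Hence $\El{\Alt{\Id{\C}}}$ has objects $(c,\star)$ and arrows $(f,\star)$, with $(g,\star)\cdot(f,\star)=(g\cdot f,\star)$ and $\id{c,\star}=(\id{c},\star)$ by the definition of the category of elements; since $\Res{\Id{\C}}$ is literally $(c,\star)\mapsto c$ and $(f,\star)\mapsto f$, it transports these two equations to $\id{c}=\id{c}$ and $g\cdot f=g\cdot f$, so it is a functor $\El{\Alt{\Id{\C}}}\to\C$. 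This settles the identity case.

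\textbf{The first component of a composite.} For $G\circ F$ with $F:\C\oto\D$ and $G:\D\oto\E$, the object assignment $\Alt{(G\circ F)}(c)=\coprod_{x\in\Alt{F}(c)}(\Alt{G}\circ\Res{F})(c,x)$ is a set, so the work is to show the arrow assignment $\langle x',y'\rangle\mapsto\langle\Alt{F}(f)^\po(x'),(\Alt{G}\circ\Res{F})(f,x')^\po(y')\rangle$ is a well-defined function $\Alt{(G\circ F)}(c')\to\Alt{(G\circ F)}(c)$ and is functorial. For well-definedness, the decomposition of an element of the coproduct into its index $x'$ and its value $y'\in(\Alt{G}\circ\Res{F})(c',x')$ is unique, and since $(f,x')$ is an arrow $(c,\Alt{F}(f)^\po(x'))\to(c',x')$ in $\El{\Alt{F}}$, the function $(\Alt{G}\circ\Res{F})(f,x')^\po$ maps $(\Alt{G}\circ\Res{F})(c',x')$ into $(\Alt{G}\circ\Res{F})(c,\Alt{F}(f)^\po(x'))$, i.e. into exactly the summand indexed by $x=\Alt{F}(f)^\po(x')$. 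Functoriality then splits componentwise: the first components reduce to functoriality of $\Alt{F}$, and the second components reduce to chaining together three facts — the composition and identity laws $(g\cdot f,x'')=(g,x'')\cdot(f,\Alt{F}(g)^\po(x''))$ and $\id{c,x}=(\id{c},x)$ in $\El{\Alt{F}}$, functoriality of $\Res{F}$, and functoriality of $\Alt{G}$ — after applying $(-)^\po$, which reverses the order of composition and thereby matches the reversal already built into $\Set^\op$.

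\textbf{The second component of a composite.} Next I would check that $\Res{(G\circ F)}(f,\langle x',y'\rangle)=\Res{G}(\Res{F}(f,x'),y')$ has consistent endpoints, namely that it is an arrow from $\Res{G}(\Res{F}(c,x),y)$ to $\Res{G}(\Res{F}(c',x'),y')$ where $\langle x,y\rangle=\Alt{(G\circ F)}(f)^\po(\langle x',y'\rangle)$: this follows because $\Res{F}(f,x'):\Res{F}(c,x)\to\Res{F}(c',x')$ in $\D$, and then $(\Res{F}(f,x'),y')$ is an arrow $(\Res{F}(c,x),y)\to(\Res{F}(c',x'),y')$ in $\El{\Alt{G}}$, the needed identity $\Alt{G}(\Res{F}(f,x'))^\po(y')=y$ being precisely the second-component formula for $\Alt{(G\circ F)}(f)^\po$. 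Functoriality of $\Res{(G\circ F)}$ then follows the same pattern: expand $(g\cdot f,\langle x'',y''\rangle)$ using the composition law in $\El{\Alt{(G\circ F)}}$, rewrite $(g\cdot f,x'')=(g,x'')\cdot(f,x')$ in $\El{\Alt{F}}$, apply functoriality of $\Res{F}$, decompose the resulting composite in $\El{\Alt{G}}$, and apply functoriality of $\Res{G}$; the identity case is the one-line computation $\Res{G}(\Res{F}(\id{c},x),y)=\Res{G}(\id{\Res{F}(c,x)},y)=\id{\Res{G}(\Res{F}(c,x),y)}$.

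I expect the only real obstacle to be bookkeeping: keeping the direction of every arrow straight across the layered opposite conventions (the category $\Set^\op$, the elements functor $\El{\--}:[\C,\Set^\op]\to\Cat^\op$, and the $(-)^\po$ notation), and confirming at each step that the element produced lands in the intended coproduct summand. Beyond this there is no genuine difficulty; the content is entirely the interplay between functoriality of $\Alt{F},\Res{F},\Alt{G},\Res{G}$ and the elementary description of categories of elements recalled in the background section.
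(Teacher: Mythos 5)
Your proof is correct and is exactly the routine verification intended: the paper's own ``proof'' of this proposition consists of the single remark that the check is obvious but should be remembered, so your write-up simply supplies the details the paper omits. In particular you correctly isolate the one point that requires genuine care, namely that $(\Alt{G}\circ\Res{F})(f,x')^\po(y')$ lands in the coproduct summand indexed by $\Alt{F}(f)^\po(x')$, which is what makes the arrow assignment of $\Alt{(G\circ F)}$ well defined before the componentwise functoriality argument can proceed.
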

\begin{proof}
    Although obvious, one should still remember to check this.
\end{proof}

It is now easy to see that this composition is neither strictly associative, strictly left unital, nor strictly right unital.
Indeed, the more we compose, the more we have pairs of pairs, even when we compose with open identity functors.
In fact, the nesting of the pairs mimics the nesting of the compositions.
This is why we have a bicategory and not a 2-category.
So let us now introduce the open natural transformations first, in order to state the properties of the composition and its relations with identity open functors up to open natural isomorphisms.

\begin{remark}
    The following definitions of open natural transformations can be justified as being intuitively induced by the diagrams.
    It becomes even obvious in other presentations of open functors, but this is out of the scope of this report.
    We only propose a comparison in Remark~\ref{rem:vertical-composition} below as an additional clue.
\end{remark}

\begin{definition}\label{def:open-nat-trans}
    Given two categories $\C$ and $\D$ and two open functors $F, G : \C \oto \D$, an \emph{open natural transformation} $\theta$ \emph{from $F$ to $G$}, denoted $\theta : F \oTo G : \C \oto \D$, is given by two natural transformations $\Alt{\theta} : \Alt{F} \To \Alt{G} : \C \to \Set^\op$ and $\Res{\theta} : \Res{F}\circ(\El{\Alt{\theta}})^\po \To \Res{G} : \El{\Alt{G}} \to \D$, as in the following diagram.
    \begin{center}
    \begin{tikzpicture}[every node/.style={fill=white,fill opacity=0.8,text opacity=1}]
        \node (c)     at (0,0)   {$\C$};
        \node (setop) at (3,0)   {$\Set^\op$};
        \node (d)     at (7.5,0) {$\D$};
    
        \draw[->] (c) to[bend left=63] (setop);
        \node (f) at (1.5,+1) {$\Alt{F}$};
        \node (elf) at (4.5,+1) {$\El{\Alt{F}}$};
        \draw[->] (elf) to[bend left=22] (d);
        \node (rf) at (6,+1) {$\Res{F}$};
        
        \draw[->] (c) to[bend left=-63] (setop);
        \node (g) at (1.5,-1) {$\Alt{G}$};
        \node (elg) at (4.5,-1) {$\El{\Alt{G}}$};
        \draw[->] (elg) to[bend left=-22] (d);
        \node (rg) at (6,-1) {$\Res{G}$};
    
        \draw[nattrans] (f) to (g);
        \node at (1.5,0) {$\Alt{\theta}$};
        \draw[<-] (elf) to (elg);
        \node at (4.5,0) {$(\El{\Alt{\theta}})^\po$};
        \draw[nattrans] (rf) to (rg);
        \node at (6,0) {$\Res{\theta}$};
    \end{tikzpicture}
    \end{center}
    We denote by $\ONat{F}{G}$ the collection of all open natural transformations between them.
\end{definition}

\begin{definition}\label{def:identity-open-nat-trans}
    Given two categories $\C$ and $\D$ and an open functor $F : \C \oto \D$, its \emph{identity open natural transformation $\ID{F} : F \oTo F$} is the open natural transformation given by $\Alt{\ID{F}} = \ID{\Alt{F}}$ and $\Res{\ID{F}} = \ID{\Res{F}}$.
    \begin{center}
    \begin{tikzpicture}[every node/.style={fill=white,fill opacity=0.8,text opacity=1}]
        \node (c)     at (0,0)   {$\C$};
        \node (setop) at (3,0)   {$\Set^\op$};
        \node (d)     at (7.5,0) {$\D$};
    
        \draw[->] (c) to[bend left=63] (setop);
        \node (f) at (1.5,+1) {$\Alt{F}$};
        \node (elf) at (4.5,+1) {$\El{\Alt{F}}$};
        \draw[->] (elf) to[bend left=22] (d);
        \node (rf) at (6,+1) {$\Res{F}$};
        
        \draw[->] (c) to[bend left=-63] (setop);
        \node (g) at (1.5,-1) {$\Alt{F}$};
        \node (elg) at (4.5,-1) {$\El{\Alt{F}}$};
        \draw[->] (elg) to[bend left=-22] (d);
        \node (rg) at (6,-1) {$\Res{F}$};
    
        \draw[nattrans] (f) to (g);
        \node at (1.5,0) {$\ID{F}$};
        \draw[double distance=2pt] (elf) to (elg);
        \node at (4.5,0) {$(\Id{\El{\Alt{F}}})^\po$};
        \draw[nattrans] (rf) to (rg);
        \node at (6,0) {$\ID{\Res{F}}$};
    \end{tikzpicture}
    \end{center}
\end{definition}

\begin{definition}\label{def:vertical-composition}
    Given two categories $\C$ and $\D$, three open functors $F, G, H : \C \oto \D$, and two open natural transformations $\theta : F \oTo G : \C \oto \D$ and $\phi : G \oTo H : \C \oto \D$, their \emph{open vertical composition $\phi \bullet \theta : F \oTo H : \C \oTo \D$} is the open natural transformation given by $\Alt{(\phi\bullet\theta)} = \Alt{\phi}\bullet\Alt{\theta}$ and $\Res{(\phi\bullet\theta)} = \Res{\phi} \bullet (\Res{\theta} \circ \ID{\El{(\Alt{\phi})^\po}})$.
\end{definition}

\begin{center}
\begin{tikzpicture}[on/.style={fill=white,fill opacity=0.8,text opacity=1}]
    \node (c)     at (0,0)   {$\C$};
    \node (setop) at (3,0)   {$\Set^\op$};
    \node (d)     at (7.5,0) {$\D$};

    \draw[->] (c) .. controls (0.5,+1.7) and (2.5,+1.7) .. (setop);
    \node[on] (f) at (1.5,+1.3) {$\Alt{F}$};
    \node (elf) at (4.5,+1.3) {$\El{\Alt{F}}$};
    \draw[->] (elf) .. controls (5.5,+1.3) and (7,1.5) .. (d);
    \node[on] (rf) at (6,+1.3) {$\Res{F}$};
    
    \draw[->](c) to (setop);
    \node[on] (g) at (1.5,0) {$\Alt{G}$};
    \node (elg) at (4.5,0) {$\El{\Alt{G}}$};
    \draw[->] (elg) to (d);
    \node[on] (rg) at (6,0) {$\Res{G}$};
    
    \draw[->] (c) .. controls (0.5,-1.7) and (2.5,-1.7) .. (setop);
    \node[on] (h) at (1.5,-1.3) {$\Alt{H}$};
    \node (elh) at (4.5,-1.3) {$\El{\Alt{H}}$};
    \draw[->] (elh) .. controls (5.5,-1.3) and (7,-1.5) .. (d);
    \node[on] (rh) at (6,-1.3) {$\Res{H}$};

    \draw[nattrans] (f) to node[right] {$\Alt{\theta}$} (g);
    \draw[style={<-}] (elf) to node[right]{$(\El{\Alt{\theta}})^\po$} (elg);
    \draw[nattrans] (rf) to node[right] {$\Res{\theta}$} (rg);
    
    \draw[nattrans] (g) to node[right] {$\Alt{\phi}$} (h);
    \draw[style={<-}] (elg) to node[right]{$(\El{\Alt{\phi}})^\po$} (elh);
    \draw[nattrans] (rg) to node[right] {$\Res{\phi}$} (rh);
\end{tikzpicture}
\end{center}

\begin{proposition}
    All identity open natural transformations and all open vertical compositions are indeed open natural transformations.
\end{proposition}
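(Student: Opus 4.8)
The plan is to check, for each of the two constructions, the two clauses of Definition~\ref{def:open-nat-trans}: that each of the two components is a genuine natural transformation, and that it has the domain and codomain prescribed there. The first clause is free in both cases, since $\Alt{\ID{F}}$, $\Res{\ID{F}}$, $\Alt{(\phi\bullet\theta)}$ and $\Res{(\phi\bullet\theta)}$ are assembled entirely out of identity natural transformations, vertical composites, and whiskerings (horizontal composites with an identity) of natural transformations in $\Cat$, each of which is again a natural transformation. So the only real content is a type-check, and the single non-formal ingredient it uses is the functoriality of $\El{\--} : [\C,\Set^\op] \to \Cat^\op$ recalled above --- in particular that it preserves identities and reverses the order of composition.

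For the identity open natural transformation, $\Alt{\ID{F}} = \ID{\Alt{F}}$ trivially has type $\Alt{F}\To\Alt{F}$, so the point is to see that $\Res{\ID{F}} = \ID{\Res{F}}$ has type $\Res{F}\circ(\El{\Alt{\ID{F}}})^\po \To \Res{F}$. For this I would observe that $(\El{\Alt{\ID{F}}})^\po = (\El{\ID{\Alt{F}}})^\po = \Id{\El{\Alt{F}}}$: this is preservation of identities by $\El{\--}$, or directly, since $\ID{\Alt{F}}[c]^\po$ is the identity function on $\Alt{F}(c)$, the definition of $(\El{\--})^\po$ yields $(c,y)\mapsto(c,y)$ on objects and $(f,y')\mapsto(f,y')$ on arrows. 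Then $\Res{F}\circ(\El{\Alt{\ID{F}}})^\po = \Res{F}$, so $\ID{\Res{F}}$ has exactly the required type.

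For the vertical composition $\phi\bullet\theta$ with $\theta : F \oTo G$ and $\phi : G \oTo H$, again $\Alt{(\phi\bullet\theta)} = \Alt{\phi}\bullet\Alt{\theta}$ is immediately of type $\Alt{F}\To\Alt{H}$, and the work is to type-check the $\Res{}$ component. I would track types through Definition~\ref{def:vertical-composition}: whiskering $\Res{\theta} : \Res{F}\circ(\El{\Alt{\theta}})^\po \To \Res{G}$ on the right by the functor $(\El{\Alt{\phi}})^\po : \El{\Alt{H}}\to\El{\Alt{G}}$ gives $\Res{\theta}\circ\ID{(\El{\Alt{\phi}})^\po}$ of type $\Res{F}\circ(\El{\Alt{\theta}})^\po\circ(\El{\Alt{\phi}})^\po \To \Res{G}\circ(\El{\Alt{\phi}})^\po$, which is vertically composable with $\Res{\phi} : \Res{G}\circ(\El{\Alt{\phi}})^\po\To\Res{H}$; hence $\Res{(\phi\bullet\theta)} = \Res{\phi}\bullet(\Res{\theta}\circ\ID{(\El{\Alt{\phi}})^\po})$ has domain $\Res{F}\circ(\El{\Alt{\theta}})^\po\circ(\El{\Alt{\phi}})^\po$ and codomain $\Res{H}$. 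To finish I would identify that domain with the required $\Res{F}\circ(\El{\Alt{(\phi\bullet\theta)}})^\po$ using functoriality, $(\El{\Alt{(\phi\bullet\theta)}})^\po = (\El{\Alt{\phi}\bullet\Alt{\theta}})^\po = (\El{\Alt{\theta}})^\po\circ(\El{\Alt{\phi}})^\po$ (the order reversing because $\El{\--}$ lands in $\Cat^\op$), together with associativity of functor composition.

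I expect the only step needing genuine care to be this variance bookkeeping --- getting the order of $(\El{\Alt{\theta}})^\po$ and $(\El{\Alt{\phi}})^\po$ right inside $(\El{\Alt{\phi}\bullet\Alt{\theta}})^\po$, and checking that the whiskering sits on the side that makes the vertical composite with $\Res{\phi}$ typecheck. Everything else is formal. If a fully explicit argument is preferred one may instead compute the components, $\Alt{(\phi\bullet\theta)}[c] = \Alt{\phi}[c]\cdot\Alt{\theta}[c]$ in $\Set^\op$ and $\Res{(\phi\bullet\theta)}[(c,z)] = \Res{\phi}[(c,z)]\cdot\Res{\theta}[(c,\Alt{\phi}[c]^\po(z))]$ for $(c,z)\in\El{\Alt{H}}$, and verify naturality directly from that of $\Res{\theta}$ and $\Res{\phi}$, but this is subsumed by the structural argument.
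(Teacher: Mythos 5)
Your proposal is correct, and since the paper's own proof of this proposition is just the remark ``Although obvious, one should still remember to check this,'' your careful type-check is precisely the verification the authors leave to the reader: the key identifications $(\El{\Alt{\ID{F}}})^\po = \Id{\El{\Alt{F}}}$ and $(\El{(\Alt{\phi}\bullet\Alt{\theta})})^\po = (\El{\Alt{\theta}})^\po \circ (\El{\Alt{\phi}})^\po$ follow from the functoriality of $\El{\--} : [\C,\Set^\op] \to \Cat^\op$ recalled in the background section, and your component formula $\Res{(\phi\bullet\theta)}[c,z] = \Res{\phi}[c,z]\cdot\Res{\theta}[c,\Alt{\phi}[c]^\po(z)]$ matches Definition~\ref{def:vertical-composition}. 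There is nothing to correct.
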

\begin{proof}
    Although obvious, one should still remember to check this.
\end{proof}

\begin{proposition}
    Given two categories $\C$ and $\D$, the collection $\OFun{\C}{\D}$ of all open functors forms a category where the collection of arrows between two open functors $F, G : \C \oto \D$ is given by the collection $\ONat{F}{G}$ of all open natural transformation, the composition by the open vertical composition $\bullet$ and the identity arrows by the identity open natural transformations.
\end{proposition}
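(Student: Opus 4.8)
The plan is to reduce the three category axioms---associativity of $\bullet$ and the left and right unit laws---to purely componentwise statements. The two propositions above already guarantee that the proposed data is well typed: the vertical composite of two composable open natural transformations is again an open natural transformation, and so is every identity open natural transformation. Since an open natural transformation $\theta$ is completely determined by its pair of components $(\Alt\theta,\Res\theta)$, and since $\bullet$ and $\ID{\--}$ are defined componentwise, it suffices to verify each axiom separately on the $\Alt{\--}$-component and on the $\Res{\--}$-component.

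On the $\Alt{\--}$-component there is nothing left to do: $\Alt{(\phi\bullet\theta)}=\Alt\phi\bullet\Alt\theta$ and $\Alt{\ID F}=\ID{\Alt F}$, so associativity and unitality there are exactly the associativity and unit laws for vertical composition of ordinary natural transformations into $\Set^\op$, \ie part of $\Cat$ being a strict $2$-category.

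The work lies in the $\Res{\--}$-component, where the defining formula $\Res{(\phi\bullet\theta)}=\Res\phi\bullet\bigl(\Res\theta\circ\ID{(\El{\Alt\phi})^\po}\bigr)$ carries a whiskering by the functor of elements $(\El{\Alt\phi})^\po$. The ingredients I would use are: functoriality of the category-of-elements functor $\El{\--}$, giving $(\El{\alpha\bullet\beta})^\po=(\El\beta)^\po\circ(\El\alpha)^\po$ and $(\El{\ID F})^\po=\Id{\El F}$; the elementary facts that a horizontal composite of identity $2$-cells is the identity $2$-cell of the composite functor, that whiskering by the identity $2$-cell of an identity functor changes nothing, and that horizontal composition is associative; and the exchange law, in particular its special case $(\phi'\bullet\phi)\circ\ID F=(\phi'\circ\ID F)\bullet(\phi\circ\ID F)$ recalled earlier. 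For the unit laws, $\Alt{\ID F}=\ID{\Alt F}$ collapses the relevant functor of elements to an identity functor via functoriality of $\El{\--}$, the whiskering then disappears, and one is left with the plain unit laws for $\bullet$ applied to $\Res\theta$. For associativity, I would expand $\Res{((\psi\bullet\phi)\bullet\theta)}$ and $\Res{(\psi\bullet(\phi\bullet\theta))}$, rewrite $(\El{\Alt{(\psi\bullet\phi)}})^\po$ as the composite $(\El{\Alt\phi})^\po\circ(\El{\Alt\psi})^\po$, split the whiskering $\Res\theta\circ\ID{(\El{\Alt\phi})^\po\circ(\El{\Alt\psi})^\po}$ using $\ID{G\circ F}=\ID G\circ\ID F$ and associativity of $\circ$, apply the special exchange law to slide a whiskering past a vertical composite in the second expression, and finally invoke associativity of vertical composition; both expansions then collapse to the same three-fold vertical composite $\bigl(\Res\psi\bullet(\Res\phi\circ\ID{(\El{\Alt\psi})^\po})\bigr)\bullet\bigl((\Res\theta\circ\ID{(\El{\Alt\phi})^\po})\circ\ID{(\El{\Alt\psi})^\po}\bigr)$.

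The only real obstacle is bookkeeping: every $\Res{\--}$ that appears lives over a different composite of elements-functors, so one must keep careful track of the domains and codomains of all the whiskered natural transformations, and must apply the special exchange law at precisely the right spot for the two expansions of the associativity identity to meet. Conceptually there is nothing to overcome---each step is a standard $2$-categorical identity valid in $\Cat$ together with functoriality of $\El{\--}$---so the proof amounts to a disciplined componentwise diagram chase.
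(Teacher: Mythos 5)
Your proposal is correct and follows essentially the same route as the paper: the $\Alt{(\--)}$ components reduce immediately to the strict $2$-categorical laws in $\Cat$, and the $\Res{(\--)}$ components are handled exactly as in the paper's computation, via the special exchange law, the collapse of whiskered identity $2$-cells, and functoriality of $\El{\--}$ (with both expansions of associativity meeting at the very composite $\bigl(\Res{\psi}\bullet(\Res{\phi}\circ\ID{(\El{\Alt{\psi}})^\po})\bigr)\bullet\bigl((\Res{\theta}\circ\ID{(\El{\Alt{\phi}})^\po})\circ\ID{(\El{\Alt{\psi}})^\po}\bigr)$ that the paper reaches). The only cosmetic remark is that for the right unit law $\theta\bullet\ID{F}=\theta$ the whiskering functor is $(\El{\Alt{\theta}})^\po$ rather than an identity, so it disappears there by the identity-of-a-composite fact you list, not by functoriality of $\El{\--}$ on identities, which is instead what handles $\ID{G}\bullet\theta=\theta$.
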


\begin{proof}
    We need to prove that the open vertical composition is associative and has the identity open natural transformations as neutral elements.
    For the associativity, let us consider four open functors $F, G, H, I : \C \oto \D$, three open natural transformations $\theta : F \oTo G$, $\phi : G \oTo H$, and $\psi : H \oTo I$, and prove that $(\psi \bullet (\phi \bullet \theta)) = ((\psi \bullet \phi) \bullet \theta)$.
    \begin{align*}
        \Alt{(\psi \bullet (\phi \bullet \theta))} 
        & = \Alt{\psi} \bullet (\Alt{\phi} \bullet \Alt{\theta}) \text{ (by Def.~\ref{def:vertical-composition} two times)}\\
        & = (\Alt{\psi} \bullet \Alt{\phi}) \bullet \Alt{\theta}\\
        & = \Alt{((\psi \bullet \phi) \bullet \theta)} \text{ (by Def.~\ref{def:vertical-composition} two times)}
    \end{align*}
    \begin{align*}
         \Res{(\psi \bullet (\phi \bullet \theta))} 
        & = \Res{\psi} \bullet ((\Res{\phi} \bullet (\Res{\theta} \circ \ID{(\El{\Alt{\phi}})^\po})) \circ \ID{(\El{\Alt{\psi}})^\po}) \text{ (by Def.~\ref{def:vertical-composition} two times)}\\
        & = \Res{\psi} \bullet ((\Res{\phi} \circ \ID{(\El{\Alt{\psi}})^\po}) \bullet ((\Res{\theta} \circ \ID{(\El{\Alt{\phi}})^\po}) \circ \ID{(\El{\Alt{\psi}})^\po}))\\
        & = (\Res{\psi} \bullet (\Res{\phi} \circ \ID{(\El{\Alt{\psi}})^\po})) \bullet (\Res{\theta} \circ (\ID{(\El{\Alt{\phi}})^\po} \circ \ID{(\El{\Alt{\psi}})^\po}))\\
        & = (\Res{\psi} \bullet (\Res{\phi} \circ \ID{(\El{\Alt{\psi}})^\po})) \bullet (\Res{\theta} \circ \ID{(\El{\Alt{\phi}})^\po \circ (\El{\Alt{\psi}})^\po})\\
        & = (\Res{\psi} \bullet (\Res{\phi} \circ \ID{(\El{\Alt{\psi}})^\po})) \bullet (\Res{\theta} \circ \ID{\El{(\Alt{\psi} \bullet \Alt{\phi})^\po}})\\
        & = \Res{((\psi \bullet \phi) \bullet \theta)} \text{ (by Def.~\ref{def:vertical-composition} two times)}
    \end{align*}
    For the neutrality, consider two open functors $F, G : \C \oto \D$ and an open natural transformation $\theta : F \oTo G$.
    Let us prove that $\theta \bullet \ID{F} = \theta$.
    \begin{center}
    \vspace{-1.5em}
    \begin{minipage}[t]{.45\textwidth}
    \begin{align*}
        & \Alt{(\theta \bullet \ID{F})} \\
        & = \Alt{\theta} \bullet \Alt{\ID{F}} \text{ (by Def.~\ref{def:vertical-composition})} \\
        & = \Alt{\theta} \bullet \ID{\Alt{F}} \text{ (by Def.~\ref{def:identity-open-nat-trans})} \\
        & = \Alt{\theta}
    \end{align*}
    \end{minipage}
    \begin{minipage}[t]{.45\textwidth}
    \begin{align*}
        & \Res{(\theta \bullet \ID{F})} \\
        & = \Res{\theta} \bullet (\Res{\ID{F}} \circ \ID{(\El{\Alt{\theta}})^\po}) \text{ (by Def.~\ref{def:vertical-composition})}\\
        & = \Res{\theta} \bullet (\ID{\Res{F}} \circ \ID{(\El{\Alt{\theta}})^\po}) \text{ (by Def.~\ref{def:identity-open-nat-trans})}\\
        & = \Res{\theta} \bullet \ID{\Res{F} \circ (\El{\Alt{\theta}})^\po}\\
        & = \Res{\theta}
    \end{align*}
    \end{minipage}
    \end{center}
    Finally, let us prove that $\ID{F} \bullet \theta = \theta$.
    \begin{center}
    \vspace{-1.5em}
    \begin{minipage}[t]{.45\textwidth}
    \begin{align*}
        & \Alt{(\ID{G} \bullet \theta)} \\
        & = \Alt{\ID{G}} \bullet \Alt{\theta} \text{ (by Def.~\ref{def:vertical-composition})} \\
        & = \ID{\Alt{G}} \bullet \Alt{\theta} \text{ (by Def.~\ref{def:identity-open-nat-trans})} \\
        & = \Alt{\theta}
    \end{align*}
    \end{minipage}
    \begin{minipage}[t]{.45\textwidth}
    \begin{align*}
        & \Res{(\ID{G} \bullet \theta)} \\
        & = \Res{\ID{G}} \bullet (\Res{\theta} \circ \ID{(\El{\Alt{\ID{G}}})^\po}) \text{ (by Def.~\ref{def:vertical-composition})}\\
        & = \ID{\Res{G}} \bullet (\Res{\theta} \circ \ID{(\El{\ID{\Alt{G}}})^\po}) \text{ (by Def.~\ref{def:identity-open-nat-trans})}\\
        & = \Res{\theta} \circ \ID{(\Id{\El{\Alt{G}}})^\po}\\
        & = \Res{\theta}
    \end{align*}
    \end{minipage}
    \end{center}
\end{proof}

\begin{definition}\label{def:open-horizontal-composition}
    Given three categories $\C$, $\D$ and $\E$, four open functors $F, F' : \C \oto \D$ and $G, G' : \D \oto \E$ and two open natural transformations $\theta : F \oTo F' : \C \oto \D$ and $\phi : G \oTo G' : \D \oto \E$, their \emph{open horizontal composition $\phi \circ \theta : G \circ F \oTo G' \circ F' : \C \oto \E$} is the open natural transformation given by $\Alt{(\phi\circ\theta)}[c]^\po(\langle x', y' \rangle) = \langle\Alt{\theta}[c]^\po(x'),(\Alt{\phi} \circ \Res{\theta})[c,x']^\po(y')\rangle$ and $\Res{(\phi\circ\theta)}[c,\langle x',y' \rangle] = \Res{\phi}[\Res{\theta}[c,x'],y']$.
\end{definition}

\begin{center}
\begin{tikzpicture}[xscale=0.75,every node/.style={fill=white,fill opacity=0.8,text opacity=1}]
    \node (c)     at (0,0)   {$\C$};
    \node (setop) at (3,0)   {$\Set^\op$};
    \node (d)     at (7.5,0) {$\D$};
    \node (setop2) at (10.5,0)   {$\Set^\op$};
    \node (e)     at (15,0) {$\E$};

    \draw[->] (c) to[bend left=63] (setop);
    \node (f) at (1.5,+1) {$\Alt{F}$};
    \node (elf) at (4.5,+1) {$\El{\Alt{F}}$};
    \draw[->] (elf) to[bend left=22] (d);
    \node (rf) at (6,+1) {$\Res{F}$};
    
    \draw[->] (c) to[bend left=-63] (setop);
    \node (f2) at (1.5,-1) {$\Alt{F'}$};
    \node (elf2) at (4.5,-1) {$\El{\Alt{F'}}$};
    \draw[->] (elf2) to[bend left=-22] (d);
    \node (rf2) at (6,-1) {$\Res{F'}$};

    \draw[nattrans] (f) to (f2);
    \node at (1.5,0) {$\Alt{\theta}$};
    \draw[<-] (elf) to (elf2);
    \node at (4.5,0) {$(\El{\Alt{\theta}})^\po$};
    \draw[nattrans] (rf) to (rf2);
    \node at (6,0) {$\Res{\theta}$};

    \draw[->] (d) to[bend left=63] (setop2);
    \node (g) at (9,+1) {$\Alt{G}$};
    \node (elg) at (12,+1) {$\El{\Alt{G}}$};
    \draw[->] (elg) to[bend left=22] (e);
    \node (rg) at (13.5,+1) {$\Res{G}$};
    
    \draw[->] (d) to[bend left=-63] (setop2);
    \node (g2) at (9,-1) {$\Alt{G'}$};
    \node (elg2) at (12,-1) {$\El{\Alt{G'}}$};
    \draw[->] (elg2) to[bend left=-22] (e);
    \node (rg2) at (13.5,-1) {$\Res{G'}$};

    \draw[nattrans] (g) to (g2);
    \node at (9,0) {$\Alt{\phi}$};
    \draw[<-] (elg) to (elg2);
    \node at (12,0) {$(\El{\Alt{\phi}})^\po$};
    \draw[nattrans] (rg) to (rg2);
    \node at (13.5,0) {$\Res{\phi}$};
\end{tikzpicture}
\end{center}

\begin{remark}\label{rem:vertical-composition}
    The reader is invited to compare these formulas with those of $\Alt{(G \circ F)}(f)^\po(\langle x',y' \rangle)$ and $\Res{(G \circ F)}(f,\langle x',y' \rangle)$ in Def~\ref{def:open-composition}, as a small justification of the previous definition.
\end{remark}

\begin{proposition}
    All open horizontal compositions are indeed open natural transformations.
\end{proposition}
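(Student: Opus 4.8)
The statement asks to verify, for the data of Definition~\ref{def:open-horizontal-composition}, the two conditions of Definition~\ref{def:open-nat-trans}: that $\Alt{(\phi\circ\theta)}$ is a natural transformation $\Alt{(G\circ F)}\To\Alt{(G'\circ F')}:\C\to\Set^\op$, and that $\Res{(\phi\circ\theta)}$ is a natural transformation $\Res{(G\circ F)}\circ(\El{\Alt{(\phi\circ\theta)}})^\po\To\Res{(G'\circ F')}:\El{\Alt{(G'\circ F')}}\to\E$. For each I would first check that the prescribed components are well-typed, and only then the naturality squares. The guiding observation, already flagged in Remark~\ref{rem:vertical-composition}, is that every formula for $\phi\circ\theta$ is the ``componentwise'' analogue of the corresponding formula for $G\circ F$ in Definition~\ref{def:open-composition}, obtained by replacing the actions $\Alt{F}(f)$, $(\Alt{G}\circ\Res{F})(f,x')$, $\Res{F}(f,x')$ of an arrow with the components $\Alt{\theta}[c]$, $(\Alt{\phi}\circ\Res{\theta})[c,x']$, $\Res{\theta}[c,x']$ of the transformations; so each verification below is the componentwise twin of the (unstated) check that $G\circ F$ is a well-defined open functor.

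The $\Alt{}$-part is the lighter one. For well-typedness, one uses that the horizontal composite $\Alt{\phi}\circ\Res{\theta}$ is a natural transformation from $(\Alt{G}\circ\Res{F})\circ(\El{\Alt{\theta}})^\po$ to $\Alt{G'}\circ\Res{F'}$, so that, read through $\po$, its component at $(c,x')$ sends $y'\in\Alt{G'}(\Res{F'}(c,x'))$ into $(\Alt{G}\circ\Res{F})(c,\Alt{\theta}[c]^\po(x'))$; hence $\langle\Alt{\theta}[c]^\po(x'),(\Alt{\phi}\circ\Res{\theta})[c,x']^\po(y')\rangle$ indeed lies in $\coprod_{x\in\Alt{F}(c)}(\Alt{G}\circ\Res{F})(c,x)=\Alt{(G\circ F)}(c)$. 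For naturality at $f:c\to c'$, I would evaluate the two legs of the square (translated through $\po$ into an equation of functions $\Alt{(G'\circ F')}(c')\to\Alt{(G\circ F)}(c)$) on an arbitrary $\langle x',y'\rangle$, unfold by Definitions~\ref{def:open-composition} and~\ref{def:open-horizontal-composition}, and match the two coordinates of the resulting pairs: the first coordinates agree by naturality of $\Alt{\theta}$ at $f$, and then — the first coordinates being now equal, so that the second ones live in the same set — the second coordinates agree by naturality of the horizontal composite $\Alt{\phi}\circ\Res{\theta}$ at the arrow $(f,x'):(c,\Alt{F'}(f)^\po(x'))\to(c',x')$ of $\El{\Alt{F'}}$. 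Both hold with no work, a horizontal composite of natural transformations being a natural transformation.

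For the $\Res{}$-part, well-typedness of $\Res{(\phi\circ\theta)}[c,\langle x',y'\rangle]=\Res{\phi}[\Res{\theta}[c,x'],y']$, read as the arrow component of $\Res{\phi}$ at the arrow $(\Res{\theta}[c,x'],y')$ of $\El{\Alt{G'}}$, amounts to: its codomain is $\Res{G'}(\Res{F'}(c,x'),y')=\Res{(G'\circ F')}(c,\langle x',y'\rangle)$, which is immediate, and its domain is $\Res{(G\circ F)}((\El{\Alt{(\phi\circ\theta)}})^\po(c,\langle x',y'\rangle))$. The domain is the one point needing care: applying $\Res{G}\circ(\El{\Alt{\phi}})^\po$ to the source of $(\Res{\theta}[c,x'],y')$ produces $\Res{G}\bigl(\Res{F}(c,\Alt{\theta}[c]^\po(x')),\;\Alt{\phi}[d]^\po(\Alt{G'}(\Res{\theta}[c,x'])^\po(y'))\bigr)$, where $d=\Res{F}(c,\Alt{\theta}[c]^\po(x'))$ is the source of $\Res{\theta}[c,x']$, and by the ``arrow component'' identity $\Alt{\phi}[g]^\po=\Alt{\phi}[d]^\po\circ\Alt{G'}(g)^\po$ (taken at $g=\Res{\theta}[c,x']$, with source $d$) the inner element is $(\Alt{\phi}\circ\Res{\theta})[c,x']^\po(y')$; hence the domain is $\Res{(G\circ F)}(c,\langle\Alt{\theta}[c]^\po(x'),(\Alt{\phi}\circ\Res{\theta})[c,x']^\po(y')\rangle)$, exactly as required.

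Finally, for the naturality of $\Res{(\phi\circ\theta)}$ at an arrow $(f,\langle x',y'\rangle)$ of $\El{\Alt{(G'\circ F')}}$, my plan is to prove that \emph{both} legs of the square equal the single arrow $\Res{\phi}[\Res{\theta}[(f,x')],y']$, where $\Res{\theta}[(f,x')]$ is the arrow component of $\Res{\theta}$ at the arrow $(f,x')$ of $\El{\Alt{F'}}$. For the source-side leg (the one starting with $\Res{(G'\circ F')}(f,\langle x',y'\rangle)\cdot\Res{(\phi\circ\theta)}[\dots]$) I would expand the $\Res{\phi}[-,-]$ factor by the source form $\Res{\phi}[(g,z')]=\Res{G'}(g,z')\cdot\Res{\phi}[s]$ of the arrow-component notation ($s$ the source of $(g,z')$), fuse the two $\Res{G'}$-factors by functoriality of $\Res{G'}$ together with the composition law of $\El{\Alt{G'}}$, and rewrite $\Res{F'}(f,x')\cdot\Res{\theta}[c,\Alt{F'}(f)^\po(x')]$ as $\Res{\theta}[(f,x')]$ by naturality of $\Res{\theta}$ (also simplifying the accompanying $\Alt{G'}$-element label by functoriality of $\Alt{G'}$); for the target-side leg I would symmetrically use the target form $\Res{\phi}[(g,z')]=\Res{\phi}[t]\cdot(\Res{G}\circ(\El{\Alt{\phi}})^\po)(g,z')$ ($t$ the target), functoriality of $\Res{G}$ with the composition law of $\El{\Alt{G}}$, and rewrite $\Res{\theta}[c',x']\cdot\Res{F}(f,\Alt{\theta}[c']^\po(x'))$ as $\Res{\theta}[(f,x')]$, again by naturality of $\Res{\theta}$. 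The only genuine obstacle in the whole proof is the bookkeeping of sources and targets through the coproduct-and-opposite conventions, together with spotting this ``both legs $=\Res{\phi}[\Res{\theta}[(f,x')],y']$'' reduction; once it is in place each individual step is just an instance of naturality of $\Res{\theta}$, of functoriality of $\Res{G}$, $\Res{G'}$ or $\Alt{G'}$, or of the ``arrow component'' and $\El{}$-composition identities recalled at the start of the report.
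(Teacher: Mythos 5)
Your proposal is correct, and there is in fact no ``paper route'' to compare it against: the paper's proof of this proposition is the single sentence that the fact, although obvious, should still be checked, so your text supplies precisely the verification the authors leave to the reader. Your decomposition is the natural one—well-typedness followed by naturality, separately for $\Alt{(\phi\circ\theta)}$ and for $\Res{(\phi\circ\theta)}$, checked against Definitions~\ref{def:open-nat-trans} and~\ref{def:open-composition}—and the two genuinely delicate points are handled correctly. First, the domain of $\Res{(\phi\circ\theta)}[c,\langle x',y'\rangle]=\Res{\phi}[\Res{\theta}[c,x'],y']$ agrees with $\Res{(G\circ F)}\bigl((\El{\Alt{(\phi\circ\theta)}})^\po(c,\langle x',y'\rangle)\bigr)$ exactly because $\Alt{\phi}[\Res{\theta}[c,x']]^\po=\Alt{\phi}[d]^\po\circ\Alt{G'}(\Res{\theta}[c,x'])^\po$ for $d$ the source of $\Res{\theta}[c,x']$, which is the arrow-component identity you invoke. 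Second, your reduction of both legs of the naturality square at $(f,\langle x',y'\rangle)$ to the single diagonal arrow $\Res{\phi}[\Res{\theta}[(f,x')],y']$, using the composition law of the categories of elements, functoriality of $\Res{G}$ and $\Res{G'}$, and naturality of $\Res{\theta}$, is valid and is exactly the style of argument the paper's background section advocates with its arrow-component notation (both factors fuse because the second components are composable in $\El{\Alt{G'}}$, respectively $\El{\Alt{G}}$, by the same identity as above). Your first-component argument is likewise sound: the second coordinate of the naturality square is an instance of naturality of the horizontal composite $\Alt{\phi}\circ\Res{\theta}$ at the arrow $(f,x')$ of $\El{\Alt{F'}}$, once the first coordinates have been matched by naturality of $\Alt{\theta}$. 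In short, the plan is complete and each step is justified by facts already recalled in the paper.
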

\begin{proof}
    Although obvious, one should still remember to check this.
\end{proof}

\begin{proposition}
    Given any three categories $\C$, $\D$ and $\E$, the composition of open functors together with the horizontal composition of open natural transformations form a functor from $\OFun{\D}{\E}\times\OFun{\C}{\D}$ to $\OFun{\C}{\E}$.
\end{proposition}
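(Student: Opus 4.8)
The plan is to note first that both halves of the functor are already defined: its value on a pair of open functors $(G,F)$ is the composite $G \circ F$ of Definition~\ref{def:open-composition}, and its value on a pair of open natural transformations $(\phi,\theta)$ is the horizontal composite $\phi \circ \theta$ of Definition~\ref{def:open-horizontal-composition}. The propositions already proved guarantee that these are again an open functor and an open natural transformation, and reading the domains and codomains off Definition~\ref{def:open-horizontal-composition} shows the assignment is well typed. So what remains is to verify the two functor axioms: that identities are sent to identities, $\ID{G} \circ \ID{F} = \ID{G \circ F}$, and that composition is preserved, which here is precisely the exchange law
\[
(\phi' \bullet \phi) \circ (\theta' \bullet \theta) = (\phi' \circ \theta') \bullet (\phi \circ \theta)
\]
for composable pairs $\theta : F \oTo F'$, $\theta' : F' \oTo F''$ in $\OFun{\C}{\D}$ and $\phi : G \oTo G'$, $\phi' : G' \oTo G''$ in $\OFun{\D}{\E}$. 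Since an open natural transformation is determined by its two constituent natural transformations, one valued in $\Set^\op$ and one valued in $\D$, and a natural transformation is determined by its components at objects, I would check both axioms componentwise, evaluating at a generic object.

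For the identity axiom, fix $c \in \C$, $x' \in \Alt{F}(c)$ and $y' \in (\Alt{G}\circ\Res{F})(c,x')$. Unfolding Definition~\ref{def:open-horizontal-composition} for $\ID{G} \circ \ID{F}$ and substituting $\Alt{\ID{F}} = \ID{\Alt{F}}$, $\Alt{\ID{G}} = \ID{\Alt{G}}$ and $\Res{\ID{F}} = \ID{\Res{F}}$ from Definition~\ref{def:identity-open-nat-trans}, the term $\Alt{(\ID{G}\circ\ID{F})}[c]^\po(\langle x',y'\rangle)$ collapses to $\langle x',y'\rangle$, and $\Res{(\ID{G}\circ\ID{F})}[c,\langle x',y'\rangle]=\Res{\ID{G}}[\Res{\ID{F}}[c,x'],y']$ collapses to $\id{\Res{G}(\Res{F}(c,x'),y')}$; these are exactly the components at $(c,\langle x',y'\rangle)$ of $\ID{\Alt{(G\circ F)}}$ and of $\ID{\Res{(G\circ F)}}$, and hence $\ID{G}\circ\ID{F}=\ID{G\circ F}$.

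For the exchange law I would evaluate both sides at a generic object $(c,\langle x'',y''\rangle)$. First, for the components valued in $\Set^\op$: the right-hand side unfolds via Definition~\ref{def:open-horizontal-composition} applied to $\theta'\bullet\theta$ and $\phi'\bullet\phi$, using $\Alt{(\theta'\bullet\theta)}=\Alt{\theta'}\bullet\Alt{\theta}$, $\Alt{(\phi'\bullet\phi)}=\Alt{\phi'}\bullet\Alt{\phi}$ and $\Res{(\theta'\bullet\theta)}=\Res{\theta'}\bullet(\Res{\theta}\circ\ID{(\El{\Alt{\theta'}})^\po})$ from Definition~\ref{def:vertical-composition}, while the left-hand side unfolds by composing vertically in $[\C,\Set^\op]$ the two instances of Definition~\ref{def:open-horizontal-composition} for $\phi\circ\theta$ and $\phi'\circ\theta'$. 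The first coordinate of the resulting pair matches immediately; for the second coordinate the essential observation is that the evaluation point $(c,\Alt{\theta'}[c]^\po(x''))$ of $\Alt{\phi}\circ\Res{\theta}$ is exactly $(\El{\Alt{\theta'}})^\po(c,x'')$, so the nesting on the left turns into a whiskering by $\ID{(\El{\Alt{\theta'}})^\po}$, after which the two second coordinates agree by the exchange law of $\Cat$ together with the compatibility of whiskering with horizontal composition. Next, for the components valued in $\D$: write $\delta_1:=\Res{\theta'}[c,x'']$ and $\delta_0:=\Res{\theta}[c,\Alt{\theta'}[c]^\po(x'')]$. The left-hand side component at $(c,\langle x'',y''\rangle)$, obtained from the component formula for the vertical composite and then from Definition~\ref{def:open-horizontal-composition} evaluated at the shifted object $(\El{\Alt{(\phi'\circ\theta')}})^\po(c,\langle x'',y''\rangle)$, is $\Res{\phi'}[\delta_1,y'']\cdot\Res{\phi}[\delta_0,(\Alt{\phi'}\circ\Res{\theta'})[c,x'']^\po(y'')]$; the right-hand side component is $\Res{(\phi'\bullet\phi)}[\Res{(\theta'\bullet\theta)}[c,x''],y'']$, and expanding $\Res{(\theta'\bullet\theta)}[c,x'']=\delta_1\cdot\delta_0$ and $\Res{(\phi'\bullet\phi)}=\Res{\phi'}\bullet(\Res{\phi}\circ\ID{(\El{\Alt{\phi'}})^\po})$, then applying the arrow-component rule for vertical composition recalled in the background section and the defining formula for $(\El{\Alt{\phi'}})^\po$ on arrows, gives the same composite. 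This establishes the exchange law, and with it the functoriality.

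The only real difficulty is bookkeeping rather than any single ingredient: one must keep careful track of the nested pairs $\langle x,y\rangle$ produced by the coproduct in $\Alt{(G\circ F)}$ of Definition~\ref{def:open-composition}, and of the several whiskerings by identity natural transformations on categories-of-elements functors that occur inside the $\D$-valued components of vertical and horizontal composites, so that at each step the correct instance of the exchange law of $\Cat$, of the functoriality of the categories-of-elements construction from $[\C,\Set^\op]$ to $\Cat^\op$ (both on natural transformations and on their composites), and of naturality and associativity in $\D$ and $\E$ is invoked. In practice I would carry out the $\Set^\op$-valued computation first, since it isolates the $\Cat$-level content, and then the $\D$-valued one, which is a direct chase of arrows in $\E$.
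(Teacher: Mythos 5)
Your proposal is correct and follows essentially the same route as the paper: it reduces functoriality to the identity law $\ID{G}\circ\ID{F}=\ID{G\circ F}$ and the exchange law $(\phi'\bullet\phi)\circ(\theta'\bullet\theta)=(\phi'\circ\theta')\bullet(\phi\circ\theta)$, and verifies both componentwise on the $\Alt{(\--)}$ and $\Res{(\--)}$ parts using exactly the ingredients the paper uses (the exchange law in $\Cat$, the shifted evaluation point $(\El{\Alt{\theta'}})^\po(c,x'')$, the arrow-component rule, and the action of $(\El{\Alt{\phi'}})^\po$ on arrows). Only cosmetic slips: you swapped the labels ``left-hand side''/``right-hand side'' relative to the displayed equation, and the $\Res{(\--)}$ components of the composites are valued in $\E$, not $\D$; neither affects the argument.
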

\begin{proof}
    We need to prove that identities and compositions are preserved.
    For the identities, let us consider two functors $F : \C \oto \D$ and $G : \D \oto \E$.
    The identity open natural transformation of $(G, F)$ in $\OFun{\D}{\E}\times\OFun{\C}{\D}$ is $(\ID{G}, \ID{F})$.
    We want to show that $\ID{G} \circ \ID{F} = \ID{G \circ F}$.
    \begin{align*}
        & \Alt{(\ID{G} \circ \ID{F})}[c]^\po(\langle x,y\rangle) \\
        & = \langle\Alt{\ID{F}}[c]^\po(x), (\Alt{\ID{G}} \circ \Res{\ID{F}})[c,x]^\po(y)\rangle \text{ (by Def.~\ref{def:open-horizontal-composition})}\\
        & = \langle\ID{\Alt{F}}[c]^\po(x), (\ID{\Alt{G}} \circ \ID{\Res{F}})[c,x]^\po(y)\rangle \text{ (by Def.~\ref{def:identity-open-nat-trans} three times)}\\
        & = \langle\id{\Alt{F}(c)}^\po(x), \id{(\Alt{G} \circ \Res{F})(c,x)}^\po(y)\rangle \\
        & = \langle x,y\rangle \\
        & = \id{\Alt{(G \circ F)}(c)}^\po(\langle x,y\rangle)\\
        & = \ID{\Alt{(G \circ F)}}[c]^\po(\langle x,y\rangle)\\
        & = \Alt{\ID{G \circ F}}[c]^\po(\langle x,y\rangle) \text{ (by Def.~\ref{def:identity-open-nat-trans})}
    \end{align*}
    \begin{align*}
        & \Res{(\ID{G} \circ \ID{F})}[c,\langle x,y\rangle] \\
        & = \Res{\ID{G}}[\Res{\ID{F}}[c,x],y] \text{ (by Def.~\ref{def:open-horizontal-composition})}\\
        & = \ID{\Res{G}}[\ID{\Res{F}}[c,x],y] \text{ (by Def.~\ref{def:identity-open-nat-trans} two times)}\\
        & = \ID{\Res{G}}[\id{\Res{F}(c,x)},y]\\
        & = \ID{\Res{G}}[\Res{F}(c,x),y]\\
        & = \id{\Res{G}(\Res{F}(c,x),y)}\\
        & = \id{\Res{(G \circ F)}(c,\langle x,y\rangle)} \text{ (by Def.~\ref{def:open-composition})}\\
        & = \Res{\ID{G \circ F}}[c,\langle x,y\rangle] \text{ (by Def.~\ref{def:identity-open-nat-trans})}
    \end{align*}
    Consider six functors $F,F',F'' : \C \oto \D$ and $G,G',G'' : \D \oto \E$ and four open natural transformations $\theta : F \oTo F'$, $\theta' : F' \oTo F''$, $\phi : G \oTo G'$ and $\phi' : G' \oTo G''$, as in the following diagram.
    \begin{center}
    \begin{tikzpicture}[xscale=0.75,every node/.style={fill=white,fill opacity=0.8,text opacity=1}]
        \node (c)     at (0,0)   {$\C$};
        \node (setop) at (3,0)   {$\Set^\op$};
        \node (d)     at (7.5,0) {$\D$};
        \node (setop2) at (10.5,0)   {$\Set^\op$};
        \node (e)     at (15,0) {$\E$};
    
        \draw[->] (c) .. controls (0.5,+2.5) and (2.5,+2.5) .. (setop);
        \node (f) at (1.5,+2) {$\Alt{F}$};
        \node (elf) at (4.5,+2) {$\El{\Alt{F}}$};
        \draw[->] (elf) .. controls (5.5,+2.2) and (7,+2) .. (d);
        \node (rf) at (6,+2) {$\Res{F}$};
        
        \draw[->] (c) to[bend left=0] (setop);
        \node (f2) at (1.5,0) {$\Alt{F'}$};
        \node (elf2) at (4.5,0) {$\El{\Alt{F'}}$};
        \draw[->] (elf2) to[bend left=0] (d);
        \node (rf2) at (6,0) {$\Res{F'}$};
    
        \draw[->] (c) .. controls (0.5,-2.5) and (2.5,-2.5) .. (setop);
        \node (f3) at (1.5,-2) {$\Alt{F''}$};
        \node (elf3) at (4.5,-2) {$\El{\Alt{F''}}$};
        \draw[->] (elf3) .. controls (5.5,-2.2) and (7,-2) .. (d);
        \node (rf3) at (6,-2) {$\Res{F''}$};
    
        \draw[nattrans] (f) to (f2);
        \node at (1.5,+1) {$\Alt{\theta}$};
        \draw[<-] (elf) to (elf2);
        \node at (4.5,+1) {$(\El{\Alt{\theta}})^\po$};
        \draw[nattrans] (rf) to (rf2);
        \node at (6,+1) {$\Res{\theta}$};
    
        \draw[nattrans] (f2) to (f3);
        \node at (1.5,-1) {$\Alt{\theta'}$};
        \draw[<-] (elf2) to (elf3);
        \node at (4.5,-1) {$(\El{\Alt{\theta'}})^\po$};
        \draw[nattrans] (rf2) to (rf3);
        \node at (6,-1) {$\Res{\theta'}$};
    
        \draw[->] (d) .. controls (8,+2.5) and (10,+2.5) .. (setop2);
        \node (g) at (9,+2) {$\Alt{G}$};
        \node (elg) at (12,+2) {$\El{\Alt{G}}$};
        \draw[->] (elg) .. controls (13,+2.2) and (14.5,+2) .. (e);
        \node (rg) at (13.5,+2) {$\Res{G}$};
        
        \draw[->] (d) to (setop2);
        \node (g2) at (9,0) {$\Alt{G'}$};
        \node (elg2) at (12,0) {$\El{\Alt{G'}}$};
        \draw[->] (elg2) to (e);
        \node (rg2) at (13.5,0) {$\Res{G'}$};
    
        \draw[->] (d) .. controls (8,-2.5) and (10,-2.5) .. (setop2);
        \node (g3) at (9,-2) {$\Alt{G''}$};
        \node (elg3) at (12,-2) {$\El{\Alt{G''}}$};
        \draw[->] (elg3) .. controls (13,-2.2) and (14.5,-2) .. (e);
        \node (rg3) at (13.5,-2) {$\Res{G''}$};

        \draw[nattrans] (g) to (g2);
        \node at (9,1) {$\Alt{\phi}$};
        \draw[<-] (elg) to (elg2);
        \node at (12,1) {$(\El{\Alt{\phi}})^\po$};
        \draw[nattrans] (rg) to (rg2);
        \node at (13.5,1) {$\Res{\phi}$};

        \draw[nattrans] (g2) to (g3);
        \node at (9,-1) {$\Alt{\phi'}$};
        \draw[<-] (elg2) to (elg3);
        \node at (12,-1) {$(\El{\Alt{\phi'}})^\po$};
        \draw[nattrans] (rg2) to (rg3);
        \node at (13.5,-1) {$\Res{\phi'}$};
    \end{tikzpicture}
    \end{center}
    We need to show that $(\phi' \bullet \phi) \circ (\theta' \bullet \theta) = (\phi' \circ \theta') \bullet (\phi \circ \theta)$.
    The equations being lengthy, we start by showing that both terms have $\Alt{(\--)}$ part which are pairs.  We develop the $\Alt{(\--)}$ part of the first.
    \begin{align*}
        & \Alt{((\phi' \bullet \phi) \circ (\theta' \bullet \theta))}[c]^\po(\langle x'',y''\rangle) \\
        & = \langle\Alt{(\theta' \bullet \theta)}[c]^\po(x''), (\Alt{(\phi' \bullet \phi)} \circ \Res{(\theta' \bullet \theta)})[c,x'']^\po(y'')\rangle \text{ (by Def.~\ref{def:open-horizontal-composition})}
    \end{align*}
    Now let us proceed similarly for the $\Alt{(\--)}$ part of second term.
    \begin{align*}
        & \Alt{((\phi' \circ \theta') \bullet (\phi \circ \theta))}[c]^\po(\langle x'',y'' \rangle) \\        
        & = (\Alt{(\phi' \circ \theta')} \bullet \Alt{(\phi \circ \theta)})[c]^\po(\langle x'',y'' \rangle) \text{ (by Def.~\ref{def:vertical-composition})} \\
        & = \Alt{(\phi \circ \theta)}[c]^\po(\Alt{(\phi' \circ \theta')}[c]^\po(\langle x'',y''\rangle)) \\
        & = \Alt{(\phi \circ \theta)}[c]^\po(\langle\Alt{\theta'}[c]^\po(x''),(\Alt{\phi'} \circ \Res{\theta'})[c,x'']^\po(y'')\rangle) \text{ (by Def.~\ref{def:open-horizontal-composition} here and below)}\\
        & = \langle\Alt{\theta}[c]^\po(\Alt{\theta'}[c]^\po(x'')),(\Alt{\phi} \circ \Res{\theta})[c,\Alt{\theta'}[c]^\po(x'')]^\po((\Alt{\phi'} \circ \Res{\theta'})[c,x'']^\po(y''))\rangle
    \end{align*}
    Now let us show that both pairs have the same first and second components.
    \begin{align*}
        & \Alt{(\theta' \bullet \theta)}[c]^\po(x'') \\
        & = (\Alt{\theta'} \bullet \Alt{\theta})[c]^\po(x'') \text{ (by Def.~\ref{def:vertical-composition})}\\
        & = \Alt{\theta}[c]^\po(\Alt{\theta'}[c]^\po(x''))
    \end{align*}
    \begin{align*}
        & (\Alt{(\phi' \bullet \phi)} \circ \Res{(\theta' \bullet \theta)})[c,x'']^\po(y'') \\        
        & = ((\Alt{\phi'} \bullet \Alt{\phi}) \circ (\Res{\theta'} \bullet (\Res{\theta} \circ \ID{(\El{\Alt{\theta'}})^\po})))[c,x'']^\po(y'') \text{ (by Def.~\ref{def:open-horizontal-composition})}\\
        & = ((\Alt{\phi'} \circ \Res{\theta'}) \bullet (\Alt{\phi} \circ (\Res{\theta} \circ \ID{(\El{\Alt{\theta'}})^\po})))[c,x'']^\po(y'') \\
        & = ((\Alt{\phi} \circ (\Res{\theta} \circ \ID{(\El{\Alt{\theta'}})^\po}))[c,x'']^\po \circ (\Alt{\phi'} \circ \Res{\theta'})[c,x'']^\po)(y'') \\
        & = (\Alt{\phi} \circ \Res{\theta})[c,\Alt{\theta'}[c]^\po(x'')]^\po((\Alt{\phi'} \circ \Res{\theta'})[c,x'']^\po(y''))
    \end{align*}
    Now, the $\Res{(\--)}$ part of both terms are compositions.
    \begin{align*}
        & \Res{((\phi' \bullet \phi) \circ (\theta' \bullet \theta))}[c,\langle x'', y''\rangle] \\
        & = \Res{(\phi' \bullet \phi)}[\Res{(\theta' \bullet \theta)}[c,x''],y''] \text{ (by Def.~\ref{def:open-horizontal-composition})}\\
        & = (\Res{\phi'} \bullet (\Res{\phi} \circ \ID{(\El{\Alt{\phi'}})^\po}))[(\Res{\theta'} \bullet (\Res{\theta} \circ \ID{(\El{\Alt{\theta'}})^\po}))[c,x''],y''] \text{ (by Def.~\ref{def:vertical-composition} two times)}\\
        & = (\Res{\phi'} \bullet (\Res{\phi} \circ \ID{(\El{\Alt{\phi'}})^\po}))[\Res{\theta'}[c,x''] \cdot \Res{\theta}[c,\Alt{\theta'}[c]^\po(x'')],y''] \\
        & = (\Res{\phi'} \bullet (\Res{\phi} \circ \ID{(\El{\Alt{\phi'}})^\po}))[(\Res{\theta'}[c,x''],y'') \cdot (\Res{\theta}[c,\Alt{\theta'}[c]^\po(x'')],\Alt{G''}(\Res{\theta'}[c,x''])^\po(y''))] \\
        & = \Res{\phi'}[\Res{\theta'}[c,x''],y''] \cdot (\Res{\phi} \circ \ID{(\El{\Alt{\phi'}})^\po})[\Res{\theta}[c,\Alt{\theta'}[c]^\po(x'')],\Alt{G''}(\Res{\theta'}[c,x''])^\po(y'')]
    \end{align*}
    \begin{align*}
        & \Res{((\phi' \circ \theta') \bullet (\phi \circ \theta))}[c,\langle x'', y''\rangle] \\
        & = (\Res{(\phi' \circ \theta')} \bullet (\Res{(\phi \circ \theta)} \circ \ID{(\El{\Alt{(\phi' \circ \theta')}})^\po}))[c,\langle x'', y''\rangle] \text{ (by Def.~\ref{def:vertical-composition})}\\
        & = \Res{(\phi' \circ \theta')}[c,\langle x'', y''\rangle] \cdot (\Res{(\phi \circ \theta)} \circ \ID{(\El{\Alt{(\phi' \circ \theta')}})^\po})[c,\langle x'', y''\rangle]
    \end{align*}
    Let us show that both compositions have the same first and second members.
    \begin{align*}
        & \Res{\phi'}[\Res{\theta'}[c,x''],y''] \\
        & = \Res{(\phi' \circ \theta')}[c,\langle x'', y''\rangle] \text{ (by Def.~\ref{def:open-horizontal-composition})}
    \end{align*}
    \begin{align*}
        & (\Res{\phi} \circ \ID{(\El{\Alt{\phi'}})^\po})[\Res{\theta}[c,\Alt{\theta'}[c]^\po(x'')],\Alt{G''}(\Res{\theta'}[c,x''])^\po(y'')] \\
        & = \Res{\phi}[\Res{\theta}[c,\Alt{\theta'}[c]^\po(x'')],\Alt{\phi'}[\Res{F'}[c,\Alt{\theta'}[c]^\po(x'')]]^\po(\Alt{G''}(\Res{\theta'}[c,x''])^\po(y''))] \\
        & = \Res{\phi}[\Res{\theta}[c,\Alt{\theta'}[c]^\po(x'')],(\Alt{\phi'}[\Res{F'}[c,\Alt{\theta'}[c]^\po(x'')]]^\po \circ \Alt{G''}(\Res{\theta'}[c,x'']^\po))(y'')] \\
        & = \Res{(\phi \circ \theta)}[c,\langle \Alt{\theta'}[c]^\po(x''), \Alt{\phi'}[\Res{\theta'}[c,x'']]^\po(y'') \rangle]\\
        & = \Res{(\phi \circ \theta)}[c,\langle \Alt{\theta'}[c]^\po(x''), (\Alt{\phi'} \circ \Res{\theta'})[c,x'']^\po(y'') \rangle] \text{ (by Def.~\ref{def:open-horizontal-composition})}\\
        & = \Res{(\phi \circ \theta)}[c,\Alt{(\phi' \circ \theta')}[c]^\po(\langle x'', y''\rangle)]  \text{ (by Def.~\ref{def:open-horizontal-composition})}\\
        & = (\Res{(\phi \circ \theta)} \circ \ID{(\El{\Alt{(\phi' \circ \theta')}})^\po})[c,\langle x'', y''\rangle]
    \end{align*}
\end{proof}

\begin{definition}\label{def:left-unitor}\label{def:right-unitor}
    Given two categories $\C$ and $\D$, their \emph{left unitor} is the (classical) natural isomorphism $\nl_{\C\D} : (\Id{\D} \circ \--) \To (\--) : \OFun{\C}{\D} \to \OFun{\C}{\D}$ with component $\nl_{\C\D F} : \Id{\D} \circ F \oTo F$ defined as $\Alt{\nl_{\C\D F}}[c]^\po(x) = \langle x,\star\rangle$ and $\Res{\nl_{\C\D F}}[c, x] = \id{\Res{F}(c,x)}$.
    Similarly, their \emph{right unitor} is the (classical) natural isomorphism $\nr_{\C\D} : (\-- \circ \Id{\C}) \To (\--) : \OFun{\C}{\D} \to \OFun{\C}{\D}$ with component $\nr_{\C\D F} : F \circ \Id{\C} \oTo F$ defined as $\Alt{\nr_{\C\D F}}[c]^\po(x) = \langle\star,x \rangle$ and $\Res{\nr_{\C\D F}}[c,x] = \id{\Res{F}(c,x)}$.
\end{definition}

\begin{proposition}
    These are indeed a natural isomorphisms.
\end{proposition}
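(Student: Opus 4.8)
The plan is to treat the left unitor $\nl_{\C\D}$ in full and then observe that the right unitor $\nr_{\C\D}$ follows by a symmetric argument, swapping the two coordinates of the pairs and replacing $\Id{\D}$ by $\Id{\C}$. For $\nl_{\C\D}$ three things must be checked: that each component $\nl_{\C\D F}$ really is an open natural transformation $\Id{\D}\circ F\oTo F$; that it is invertible in the category $\OFun{\C}{\D}$; and that the family $(\nl_{\C\D F})_F$ is natural in $F$, \ie $\theta\bullet\nl_{\C\D F} = \nl_{\C\D F'}\bullet(\ID{\Id{\D}}\circ\theta)$ for every $\theta : F\oTo F'$. That $(\Id{\D}\circ\--)$ is indeed an endofunctor of $\OFun{\C}{\D}$ follows from the functoriality proposition above, partially applied at the object $\Id{\D}$.

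For well-definedness I would first unfold $\Id{\D}\circ F$ using Definitions~\ref{def:open-identity-functor} and~\ref{def:open-composition}: since $\Alt{\Id{\D}}$ is the constant singleton presheaf, $\Alt{(\Id{\D}\circ F)}(c) = \coprod_{x\in\Alt{F}(c)}\{\star\}$, $\Alt{(\Id{\D}\circ F)}(f)^\po$ carries $\Alt{F}(f)^\po$ on the first coordinate and the only singleton map on the second, and $\Res{(\Id{\D}\circ F)}(c,\langle x,\star\rangle) = \Res{F}(c,x)$, $\Res{(\Id{\D}\circ F)}(f,\langle x',\star\rangle) = \Res{F}(f,x')$. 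From this, $\Alt{\nl_{\C\D F}}$ — whose $\po$-components are the bijections $x\mapsto\langle x,\star\rangle$ — is natural in $\C$, and, crucially, the functor $\Res{(\Id{\D}\circ F)}\circ(\El{\Alt{\nl_{\C\D F}}})^\po$ is \emph{literally} $\Res{F}$, so the prescription $\Res{\nl_{\C\D F}}[c,x] = \id{\Res{F}(c,x)}$ does define a natural transformation, namely $\ID{\Res{F}}$. Invertibility is then immediate: the candidate inverse has $\Alt$-part the inverse bijections $\langle x,\star\rangle\mapsto x$ and $\Res$-part again $\ID{\Res{F}}$, and by Definition~\ref{def:vertical-composition} both composites are identity open natural transformations — on $\Alt$-parts because one composes two mutually inverse isomorphisms in $[\C,\Set^\op]$, and on $\Res$-parts because one composes vertically (whiskered) identity natural transformations.

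For naturality I would compare the $\Alt$- and $\Res$-parts of the two sides separately, expanding via Definitions~\ref{def:open-horizontal-composition}, \ref{def:vertical-composition}, \ref{def:open-composition} and~\ref{def:identity-open-nat-trans}. The key simplification is that $\Alt{\ID{\Id{\D}}}$ and $\Res{\ID{\Id{\D}}}$ are identity transformations of constant-singleton data, so $\ID{\Id{\D}}\circ\theta$ is just $\theta$ with its data re-tagged by $\star$'s, while $\Res{\Id{\D}}$ discards those $\star$'s. One then finds that on $\Alt$-parts both $\theta\bullet\nl_{\C\D F}$ and $\nl_{\C\D F'}\bullet(\ID{\Id{\D}}\circ\theta)$ send (on $\po$-functions) an element $x'\in\Alt{F'}(c)$ to $\langle\Alt{\theta}[c]^\po(x'),\star\rangle$, and that on $\Res$-parts both have component $\Res{\theta}[c,x']$ at $(c,x')\in\El{\Alt{F'}}$, using that pre/postcomposition with the relevant identity is trivial. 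Hence the square commutes, $\nl_{\C\D}$ is a natural isomorphism, and $\nr_{\C\D}$ follows mutatis mutandis.

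The main obstacle is bookkeeping rather than conceptual: one must keep straight the variance of $\Set^\op$ and of $\El{\--}$, and the whiskering by $\ID{(\El{\Alt{(\cdot)}})^\po}$ built into vertical composition (Definition~\ref{def:vertical-composition}). The single most delicate computation is the $\Res$-part of the naturality square, where Definitions~\ref{def:open-horizontal-composition}, \ref{def:vertical-composition}, \ref{def:open-composition} and~\ref{def:open-identity-functor} all intervene at once; everything else reduces to the fact that the data attached to an identity open functor are singletons whose only self-map is the identity.
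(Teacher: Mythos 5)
Your proposal is correct and takes essentially the same route as the paper: the isomorphism part is the trivial observation about the $\star$-pairings and identity $\Res$-components, and the substance is the componentwise check of the naturality square, where both sides reduce on $\Alt$-parts to $x'\mapsto\langle\Alt{\theta}[c]^\po(x'),\star\rangle$ and on $\Res$-parts to $\Res{\theta}[c,x']$, exactly as in the paper (your extra well-definedness remarks are handled there by separate ``one should still check'' propositions). The only caveat is that the right unitor is not a literal mirror image of the left one, since open composition treats its two arguments asymmetrically (the naturality square now involves $\theta\circ\ID{\Id{\C}}$ rather than $\ID{\Id{\D}}\circ\theta$, whose horizontal-composition formula unfolds differently), so the paper carries out that case explicitly; the analogous computation does go through, again because the identity open functor's data are singletons.
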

\begin{proof}
    These maps are trivially isomorphism.
    Now, consider some open natural transformation $\theta : F \oTo G : \C \oto \D$.
    We need to prove that the naturality square commutes, \ie $\nl_{\C\D G} \bullet (\ID{\Id{\D}} \circ \theta) = \theta \bullet \nl_{\C\D F}$.
    For each component $\Alt{(\--)}$ and $\Res{(\--)}$, we show that both terms reduce to the same expression.
    \begin{align*}
        \Alt{(\theta \bullet \nl_{\C\D F})}[c]^\po(y)
        & = (\Alt{\theta} \bullet \Alt{\nl_{\C\D F}})[c]^\po(y) \text{ (by Def.~\ref{def:vertical-composition})}\\
        & = (\Alt{\nl_{\C\D F}}[c]^\po \circ \Alt{\theta}[c]^\po)(y) \\
        & = \Alt{\nl_{\C\D F}}[c]^\po(\Alt{\theta}[c]^\po(y)) \\
        & = \langle \Alt{\theta}[c]^\po(y),\star \rangle \text{ (by Def.~\ref{def:left-unitor})}
    \end{align*}
    \begin{align*}
        & \Alt{(\nl_{\C\D G} \bullet (\ID{\Id{\D}} \circ \theta))}[c]^\po(y) \\
        & = (\Alt{\nl_{\C\D G}} \bullet \Alt{(\ID{\Id{\D}} \circ \theta)})[c]^\po(y) \text{ (by Def.~\ref{def:vertical-composition})} \\
        & = (\Alt{(\ID{\Id{\D}} \circ \theta)}[c]^\po \circ \Alt{\nl_{\C\D G}}[c]^\po)(y) \\
        & = \Alt{(\ID{\Id{\D}} \circ \theta)}[c]^\po(\langle y,\star \rangle)  \text{ (by Def.~\ref{def:left-unitor})}\\
        & = \langle \Alt{\theta}[c]^\po(y), (\Alt{\ID{\Id{\D}}} \circ \Res{\theta})[c,y]^\po(\star) \rangle \text{ (by Def.~\ref{def:open-horizontal-composition})} \\
        & = \langle \Alt{\theta}[c]^\po(y), \star \rangle
    \end{align*}
    \begin{align*}
        \Res{(\theta \bullet \nl_{\C\D F})}[c,y]
        & = (\Res{\theta} \bullet (\Res{\nl_{\C\D F}} \circ \ID{(\El{\Alt{\theta}})^\po}))[c,y] \text{ (by Def.~\ref{def:vertical-composition})} \\
        & = \Res{\theta}[c,y] \cdot \Res{\nl_{\C\D F}}[c,\Alt{\theta}[c]^\po(y)] \\
        & = \Res{\theta}[c,y] \cdot \id{\Res{F}(c,\Alt{\theta}[c]^\po(y))} \text{ (by Def.~\ref{def:left-unitor})} \\
        & = \Res{\theta}[c,y]
    \end{align*}
    \begin{align*}
        & \Res{(\nl_{\C\D G} \bullet (\ID{\Id{\D}} \circ \theta))}[c,y] \\
        & = (\Res{\nl_{\C\D G}} \bullet (\Res{(\ID{\Id{\D}} \circ \theta)} \circ \ID{(\El{\Alt{\nl_{\C\D G}}})^\po}))[c,y] \text{ (by Def.~\ref{def:vertical-composition})} \\
        & = \Res{\nl_{\C\D G}}[c,y] \cdot \Res{(\ID{\Id{\D}} \circ \theta)}[c,\Alt{\nl_{\C\D G}}[c]^\po(y)] \\
        & = \id{\Res{G}(c,y)} \cdot \Res{(\ID{\Id{\D}} \circ \theta)}[c,\langle y, \star\rangle]  \text{ (by Def.~\ref{def:left-unitor} two times)}\\
        & = \Res{(\ID{\Id{\D}} \circ \theta)}[c,\langle y, \star\rangle] \\
        & = \ID{\Res{\Id{\D}}}[\Res{\theta}[c,y],\star] \text{ (by Def.~\ref{def:open-horizontal-composition} and~\ref{def:identity-open-nat-trans})} \\
        & = \Res{\theta}[c,y]  \text{ (by Def.~\ref{def:open-identity-functor})}
    \end{align*}
    Similarly for the right unitor, we need to prove that the naturality square commutes, \ie $\nr_{\C\D G} \bullet (\theta \circ \ID{\Id{\D}}) = \theta \bullet \nr_{\C\D F}$.
    \begin{align*}
        \Alt{(\theta \bullet \nr_{\C\D F})}[c]^\po(y)
        & = (\Alt{\theta} \bullet \Alt{\nr_{\C\D F}})[c]^\po(y) \text{ (by Def.~\ref{def:vertical-composition})}\\
        & = (\Alt{\nr_{\C\D F}}[c]^\po \circ \Alt{\theta}[c]^\po)(y) \\
        & = \Alt{\nr_{\C\D F}}[c]^\po(\Alt{\theta}[c]^\po(y)) \\
        & = \langle \star, \Alt{\theta}[c]^\po(y) \rangle \text{ (by Def.~\ref{def:left-unitor})}
    \end{align*}
    \begin{align*}
        & \Alt{(\nr_{\C\D G} \bullet (\theta \circ \ID{\Id{\D}}))}[c]^\po(y) \\
        & = (\Alt{\nr_{\C\D G}} \bullet \Alt{(\theta \circ \ID{\Id{\D}})})[c]^\po(y) \text{ (by Def.~\ref{def:vertical-composition})} \\
        & = (\Alt{(\theta \circ \ID{\Id{\D}})}[c]^\po \circ \Alt{\nr_{\C\D G}}[c]^\po)(y) \\
        & = \Alt{(\theta \circ \ID{\Id{\D}})}[c]^\po(\langle \star,y \rangle)  \text{ (by Def.~\ref{def:left-unitor})}\\
        & = \langle \Alt{\ID{\Id{\D}}}[c]^\po(\star), (\Alt{\theta} \circ \Res{\ID{\Id{\D}}})[c,y]^\po(\star) \rangle \text{ (by Def.~\ref{def:open-horizontal-composition})}  \\
        & = \langle \star, \Alt{\theta}[c]^\po(y) \rangle \text{ (by Def.~\ref{def:open-identity-functor} and~\ref{def:identity-open-nat-trans})}
    \end{align*}
    \begin{align*}
        \Res{(\theta \bullet \nr_{\C\D F})}[c,y]
        & = (\Res{\theta} \bullet (\Res{\nr_{\C\D F}} \circ \ID{(\El{\Alt{\theta}})^\po}))[c,y] \text{ (by Def.~\ref{def:vertical-composition})} \\
        & = \Res{\theta}[c,y] \cdot \Res{\nr_{\C\D F}}[c,\Alt{\theta}[c]^\po(y)] \\
        & = \Res{\theta}[c,y] \cdot \id{\Res{F}(c,\Alt{\theta}[c]^\po(y))} \text{ (by Def.~\ref{def:left-unitor})} \\
        & = \Res{\theta}[c,y]
    \end{align*}
    \begin{align*}
        & \Res{(\nr_{\C\D G} \bullet (\theta \circ \ID{\Id{\D}}))}[c,y] \\
        & = (\Res{\nr_{\C\D G}} \bullet (\Res{(\theta \circ \ID{\Id{\D}})} \circ \ID{(\El{\Alt{\nr_{\C\D G}}})^\po}))[c,y] \text{ (by Def.~\ref{def:vertical-composition})} \\
        & = \Res{\nr_{\C\D G}}[c,y] \cdot \Res{(\theta \circ \ID{\Id{\D}})}[c,\Alt{\nr_{\C\D G}}[c]^\po(y)] \\
        & = \id{\Res{G}(c,y)} \cdot \Res{(\theta \circ \ID{\Id{\D}})}[c,\langle \star, y \rangle]  \text{ (by Def.~\ref{def:left-unitor} two times)}\\
        & = \Res{(\theta \circ \ID{\Id{\D}})}[c,\langle \star, y \rangle] \\
        & = \Res{\theta}[\ID{\Res{\Id{\D}}}[c,\star],y] \text{ (by Def.~\ref{def:open-horizontal-composition} and~\ref{def:identity-open-nat-trans})} \\
        & = \Res{\theta}[\Res{\Id{\D}}(c,\star),y] \\
        & = \Res{\theta}[c,y] \text{ (by Def.~\ref{def:open-identity-functor})}
    \end{align*}
\end{proof}

\begin{definition}\label{def:associator}
    Given four open categories $\B$, $\C$, $\D$ and $\E$, and three open functors $F : \B \oto \C$, $G : \C \oto \D$, and $H: \D \oto \E$, their \emph{associator} is the (classical) natural isomorphism $\na_{\B\C\D\E} : (\-- \circ (\-- \circ \--)) \to ((\-- \circ \--) \circ \--) : \OFun{\D}{\E} \times\OFun{\C}{\D} \times\OFun{\B}{\C} \to \OFun{\B}{\E}$ with component $\na_{\B\C\D\E}[F,G,H] : H \circ (G \circ F) \oto (H \circ G) \circ F : \B \oto \E$ defined as $\Alt{(\na_{\B\C\D\E}[F,G,H])}[b]^\po(\langle x,\langle y,z\rangle\rangle) = \langle\langle x,y\rangle,z\rangle$ and $\Res{(\na_{\B\C\D\E}[F,G,H])}[b,\langle x,\langle y,z\rangle\rangle] = \id{\Res{H}(\Res{G}(\Res{F}(b,x),y),z)}$ (or equivalently $\Res{(\na_{\B\C\D\E}[F,G,H])} = \ID{\Res{((H \circ G) \circ F)}}$).
\end{definition}
\begin{proposition}
    This is indeed a natural isomorphism.
\end{proposition}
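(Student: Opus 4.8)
The plan is to follow the template already used for the two unitors: first check that each component $\na_{\B\C\D\E}[F,G,H]$ is an isomorphism in $\OFun{\B}{\E}$, and then check that $\na_{\B\C\D\E}$ is natural in $(H,G,F)$. Fix $\B$, $\C$, $\D$, $\E$ throughout.

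For the first point, note that the $\Alt{(\--)}$ part of $\na_{\B\C\D\E}[F,G,H]$ is, at each $b$, the reassociation bijection $\langle x,\langle y,z\rangle\rangle \mapsto \langle\langle x,y\rangle,z\rangle$ between $\Alt{((H \circ G) \circ F)}(b)$ and $\Alt{(H \circ (G \circ F))}(b)$; its naturality is a straightforward double application of the arrow formula for $\Alt{(G \circ F)}(f)^\po$ in Def.~\ref{def:open-composition}. Its $\Res{(\--)}$ part is the identity natural transformation $\ID{\Res{((H \circ G) \circ F)}}$ by construction. So I would take the inverse open natural transformation to be the one whose $\Alt{(\--)}$ part is the opposite reassociation $\langle\langle x,y\rangle,z\rangle \mapsto \langle x,\langle y,z\rangle\rangle$ and whose $\Res{(\--)}$ part is again an identity, and then check that it is a two-sided inverse for $\bullet$: on the $\Alt{(\--)}$ parts this is a bijection composed with its inverse, and on the $\Res{(\--)}$ parts, after the harmless whiskerings required by Def.~\ref{def:vertical-composition}, it is a vertical composite of identity natural transformations.

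For naturality, fix open natural transformations $\theta : F \oTo F'$, $\phi : G \oTo G'$ and $\psi : H \oTo H'$; I must establish
\[
  ((\psi \circ \phi) \circ \theta) \bullet \na_{\B\C\D\E}[F,G,H] \;=\; \na_{\B\C\D\E}[F',G',H'] \bullet (\psi \circ (\phi \circ \theta))
\]
as open natural transformations $H \circ (G \circ F) \oTo (H' \circ G') \circ F'$. As in the unitor proofs, I would evaluate the $\Alt{(\--)}$ and $\Res{(\--)}$ parts of the two sides at a generic element $\langle x',\langle y',z'\rangle\rangle$ of $\Alt{((H' \circ G') \circ F')}(b)$ (respectively at the generic object $(b,\langle x',\langle y',z'\rangle\rangle)$) and show they agree. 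The $\Res{(\--)}$ computation is short because $\Res{\na_{\B\C\D\E}[F,G,H]}$ and $\Res{\na_{\B\C\D\E}[F',G',H']}$ are identity natural transformations (Def.~\ref{def:associator}): expanding Def.~\ref{def:vertical-composition} once and Def.~\ref{def:open-horizontal-composition} twice, the left side collapses to $\Res{\psi}[\Res{\phi}[\Res{\theta}[b,x'],y'],z']$, and the right side, using additionally Def.~\ref{def:associator} to identify the associator component with a reassociation, collapses to the same arrow. For the $\Alt{(\--)}$ parts, expanding Def.~\ref{def:vertical-composition}, Def.~\ref{def:associator} and Def.~\ref{def:open-horizontal-composition} — the last once for each of the three nesting levels — together with the component identity $(\phi \circ \theta)[c] = \phi[\theta[c]]$ for horizontal composition and $\Res{(\phi \circ \theta)}[b,\langle x',y'\rangle] = \Res{\phi}[\Res{\theta}[b,x'],y']$ from Def.~\ref{def:open-horizontal-composition}, both sides are brought to the pair
\[
  \big\langle \langle\Alt{\theta}[b]^\po(x'),(\Alt{\phi} \circ \Res{\theta})[b,x']^\po(y')\rangle,\; (\Alt{\psi} \circ \Res{(\phi \circ \theta)})[b,\langle x',y'\rangle]^\po(z') \big\rangle .
\]
The right-hand side of the equation reaches this pair in one pass; the left-hand side needs the extra step of re-nesting before it matches.

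Conceptually there is no obstacle: both $H \circ (G \circ F)$ and $(H \circ G) \circ F$, and both $\psi \circ (\phi \circ \theta)$ and $(\psi \circ \phi) \circ \theta$, are literally the same data modulo reassociating the nested pairs, and conjugating by that reassociation — which is exactly what composing vertically with the associator components does — turns the two sides of the naturality square into the same formula. The only real work, and the step I expect to cost the most, is the bookkeeping that makes this precise: keeping the two pairing conventions $\langle x,\langle y,z\rangle\rangle$ and $\langle\langle x,y\rangle,z\rangle$ apart, remembering that a vertical composite acts on the $\Alt{(\--)}$ parts in the reversed order of $\Set^\op$ so that the outer factor is applied first, and carrying along the whiskering terms from Def.~\ref{def:vertical-composition}. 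These are exactly the manipulations already performed in the proof that horizontal composition of open natural transformations is functorial, so I anticipate no genuine difficulty.
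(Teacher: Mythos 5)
Your proposal is correct and follows essentially the same route as the paper: dismiss the isomorphism part as the evident reassociation bijections paired with identity $\Res{(\--)}$ components, then verify the naturality square $\na_{\B\C\D\E}[F',G',H'] \bullet (\psi \circ (\phi \circ \theta)) = ((\psi \circ \phi) \circ \theta) \bullet \na_{\B\C\D\E}[F,G,H]$ componentwise by evaluating at a generic $(b,\langle x',\langle y',z'\rangle\rangle)$ and reducing both sides, via Definitions~\ref{def:vertical-composition}, \ref{def:open-horizontal-composition} and~\ref{def:associator}, to the common expressions $\langle\langle\Alt{\theta}[b]^\po(x'),\Alt{\phi}[\Res{\theta}[b,x']]^\po(y')\rangle,\Alt{\psi}[\Res{\phi}[\Res{\theta}[b,x'],y']]^\po(z')\rangle$ and $\Res{\psi}[\Res{\phi}[\Res{\theta}[b,x'],y'],z']$, which is exactly what the paper computes.
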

\begin{proof}
    These maps are trivially isomorphism.
    Now, consider three open natural transformations $\theta : F \oTo F' : \B \oto \C$, $\phi : G \oTo G' : \C \oto \D$, and $\psi : H \oTo H' : \D \oto \E$.
    We need to prove that the naturality square commutes, \ie $\na' \bullet (\psi \circ (\phi \circ \theta)) = ((\psi \circ \phi) \circ \theta) \bullet \na$ where $\na$ stands for $\na_{\B\C\D\E}[F,G,H]$ and $\na'$ for $\na_{\B\C\D\E}[F',G',H']$.
    \begin{align*}
        & \Alt{(\na' \bullet (\psi \circ (\phi \circ \theta)))}[b]^\po(\langle x', \langle y', z' \rangle \rangle) \\
        & = (\Alt{(\psi \circ (\phi \circ \theta))}[b]^\po \circ \Alt{\na'}[b]^\po)(\langle x', \langle y', z' \rangle \rangle) \text{ (by Def.~\ref{def:vertical-composition})}\\
        & = \Alt{(\psi \circ (\phi \circ \theta))}[b]^\po(\Alt{\na'}[b]^\po(\langle x', \langle y', z' \rangle \rangle))\\
        & = \Alt{(\psi \circ (\phi \circ \theta))}[b]^\po(\langle \langle x',  y' \rangle, z'  \rangle) \text{ (by Def.~\ref{def:associator})}\\
        & = \langle \Alt{(\phi \circ \theta)}[b]^\po (\langle x',  y' \rangle), (\Alt{\psi} \circ \Res{(\phi \circ \theta)})[b,\langle x',  y' \rangle]^\po(z') \rangle \text{ (by Def.~\ref{def:open-horizontal-composition})}\\
        & = \langle \Alt{(\phi \circ \theta)}[b]^\po (\langle x',  y' \rangle), \Alt{\psi}[\Res{(\phi \circ \theta)}[b,\langle x',  y' \rangle]]^\po(z') \rangle \\
        & = \langle \langle \Alt{\theta}[b]^\po(x'), (\Alt{\phi} \circ \Res{\theta})[b,x']^\po(y') \rangle, \Alt{\psi}[\Res{\phi}[\Res{\theta}[b,x'],y']]^\po(z') \rangle \text{ (by Def.~\ref{def:open-horizontal-composition})} \\
        & = \langle \langle \Alt{\theta}[b]^\po(x'), \Alt{\phi}[\Res{\theta}[b,x']]^\po(y') \rangle, \Alt{\psi}[\Res{\phi}[\Res{\theta}[b,x'],y']]^\po(z') \rangle
    \end{align*}
    \begin{align*}
        & \Alt{(((\psi \circ \phi) \circ \theta) \bullet \na)}[b]^\po(\langle x', \langle y', z' \rangle \rangle) \\
        & = (\Alt{\na}[b]^\po \circ \Alt{((\psi \circ \phi) \circ \theta)}[b]^\po)(\langle x', \langle y', z' \rangle \rangle) \text{ (by Def.~\ref{def:vertical-composition})}\\
        & = \Alt{\na}[b]^\po(\Alt{((\psi \circ \phi) \circ \theta)}[b]^\po(\langle x', \langle y', z' \rangle \rangle)) \\
        & = \Alt{\na}[b]^\po(\langle \Alt{\theta}[b]^\po(x'), (\Alt{(\psi \circ \phi)} \circ \Res{\theta})[b,x']^\po(\langle y', z' \rangle) \rangle) \text{ (by Def.~\ref{def:open-horizontal-composition})}\\
        & = \Alt{\na}[b]^\po(\langle \Alt{\theta}[b]^\po(x'), \Alt{(\psi \circ \phi)}[\Res{\theta}[b,x']]^\po(\langle y', z' \rangle) \rangle) \text{ (and Def.~\ref{def:open-horizontal-composition} gives)}\\
        & = \Alt{\na}[b]^\po(\langle \Alt{\theta}[b]^\po(x'), \langle \Alt{\phi}[\Res{\theta}[b,x']]^\po(y'), (\Alt{\psi} \circ \Res{\phi})[\Res{\theta}[b,x'],y']^\po(z') \rangle \rangle) \\
        & = \Alt{\na}[b]^\po(\langle \Alt{\theta}[b]^\po(x'), \langle \Alt{\phi}[\Res{\theta}[b,x']]^\po(y'), \Alt{\psi}[\Res{\phi}[\Res{\theta}[b,x'],y']]^\po(z') \rangle \rangle) \\
        & = \langle \langle \Alt{\theta}[b]^\po(x'), \Alt{\phi}[\Res{\theta}[b,x']]^\po(y') \rangle, \Alt{\psi}[\Res{\phi}[\Res{\theta}[b,x'],y']]^\po(z') \rangle \text{ (by Def.~\ref{def:associator})}
    \end{align*}
    \begin{align*}
        & \Res{(\na' \bullet (\psi \circ (\phi \circ \theta)))}[b,\langle x', \langle y', z' \rangle \rangle]\\
        & = (\Res{\na'} \bullet (\Res{(\psi \circ (\phi \circ \theta))} \circ \ID{(\El{\Alt{\na'}})^\po}))[b,\langle x', \langle y', z' \rangle \rangle] \text{ (by Def.~\ref{def:vertical-composition})}\\
        & = \Res{\na'}[b,\langle x', \langle y', z' \rangle \rangle] \cdot (\Res{(\psi \circ (\phi \circ \theta))} \circ \ID{(\El{\Alt{\na'}})^\po})[b,\langle x', \langle y', z' \rangle \rangle] \\
        & = \id{\ldots} \cdot (\Res{(\psi \circ (\phi \circ \theta))} \circ \ID{(\El{\Alt{\na'}})^\po})[b,\langle x', \langle y', z' \rangle \rangle] \text{ (by Def.~\ref{def:associator})}\\
        & = (\Res{(\psi \circ (\phi \circ \theta))} \circ \ID{(\El{\Alt{\na'}})^\po})[b,\langle x', \langle y', z' \rangle \rangle] \\
        & = \Res{(\psi \circ (\phi \circ \theta))}[b,\Alt{\na'}[b]^\po(\langle x', \langle y', z' \rangle \rangle)] \\
        & = \Res{(\psi \circ (\phi \circ \theta))}[b,\langle \langle x', y' \rangle, z' \rangle] \text{ (by Def.~\ref{def:associator})}\\
        & = \Res{\psi}[\Res{(\phi \circ \theta)}[b,\langle x', y' \rangle], z'] \text{ (by Def.~\ref{def:open-horizontal-composition})}\\
        & = \Res{\psi}[\Res{\phi}[\Res{\theta}[b,x'], y'], z'] \text{ (by Def.~\ref{def:open-horizontal-composition})}
    \end{align*}
    \begin{align*}
        & \Res{(((\psi \circ \phi) \circ \theta) \bullet \na)}[b,\langle x', \langle y', z' \rangle \rangle] \\
        & = (\Res{((\psi \circ \phi) \circ \theta)} \bullet (\Res{\na} \circ \ID{(\El{\Alt{((\psi \circ \phi) \circ \theta)}})^\po}))[b,\langle x', \langle y', z' \rangle \rangle] \text{ (by Def.~\ref{def:vertical-composition})}\\
        & = \Res{((\psi \circ \phi) \circ \theta)}[b,\langle x', \langle y', z' \rangle \rangle] \cdot (\Res{\na} \circ \ID{(\El{\Alt{((\psi \circ \phi) \circ \theta)}})^\po})[b,\langle x', \langle y', z' \rangle \rangle] \\
        & = \Res{((\psi \circ \phi) \circ \theta)}[b,\langle x', \langle y', z' \rangle \rangle] \cdot \Res{\na}[b,\Alt{((\psi \circ \phi) \circ \theta)}[b]^\po(\langle x', \langle y', z' \rangle \rangle)] \\
        & = \Res{((\psi \circ \phi) \circ \theta)}[b,\langle x', \langle y', z' \rangle \rangle] \cdot \id{\ldots} \text{ (by Def.~\ref{def:associator})}\\
        & = \Res{((\psi \circ \phi) \circ \theta)}[b,\langle x', \langle y', z' \rangle \rangle] \\
        & = \Res{(\psi \circ \phi)}[\Res{\theta}[b,x'], \langle y', z' \rangle] \text{ (by Def.~\ref{def:open-horizontal-composition})}\\
        & = \Res{\psi}[\Res{\phi}[\Res{\theta}[b,x'],y'],z'] \text{ (by Def.~\ref{def:open-horizontal-composition})}
    \end{align*}
\end{proof}
    
\begin{proposition}
    The collection of all categories together with open functors and open natural transformations, in the precise sense defined up to now, form a bicategory.
\end{proposition}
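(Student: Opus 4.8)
The plan is to observe that, by this point, every piece of data demanded by the definition of a bicategory has been produced and checked, so that only the two coherence axioms remain. The $0$-cells are categories; for each pair $\C,\D$ the hom-category is $\OFun{\C}{\D}$, already shown to be a category; the identity $1$-cell at $\C$ is $\Id{\C}$ (Def.~\ref{def:open-identity-functor}); the composition functor $\OFun{\D}{\E}\times\OFun{\C}{\D}\to\OFun{\C}{\E}$ is the one built from Def.~\ref{def:open-composition} and Def.~\ref{def:open-horizontal-composition}, already shown to be a functor; and the left unitor $\nl$, right unitor $\nr$ and associator $\na$ (Def.~\ref{def:left-unitor} and Def.~\ref{def:associator}) have already been shown to be natural isomorphisms of precisely the prescribed shapes. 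Hence the only thing left is to verify that the coherence pentagon and the coherence triangle commute. Each is an equality of open natural transformations, so I would check it separately on the $\Alt{(\--)}$-part and on the $\Res{(\--)}$-part.

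The $\Res{(\--)}$-part should be immediate. Every edge of the pentagon and of the triangle is obtained from $\na$, $\nl$, $\nr$ by open vertical composition and by open horizontal composition with an identity open natural transformation. Now $\Res{\na}$ is an identity natural transformation (stated explicitly in Def.~\ref{def:associator}), and $\Res{\nl}$, $\Res{\nr}$ are componentwise $\id{\Res{F}(c,x)}$, hence identities (Def.~\ref{def:left-unitor} and Def.~\ref{def:right-unitor}). Since Def.~\ref{def:vertical-composition} and Def.~\ref{def:open-horizontal-composition} send identity open natural transformations to identity open natural transformations --- exactly the computations already carried out above to prove $\ID{G}\circ\ID{F}=\ID{G\circ F}$ and that $\bullet$ has the identities as neutral elements --- every edge of both coherence diagrams has $\Res{(\--)}$-part equal to the identity natural transformation on the relevant iterated $\Res{(\--)}$-functor. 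Therefore the two composites around the pentagon (resp.\ the triangle) agree on $\Res{(\--)}$, and all the remaining content sits in the $\Alt{(\--)}$-part.

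For the $\Alt{(\--)}$-part I would reason as follows. Regarded as a function in $\Set$, the component of the associator is the rebracketing of nested pairs $\langle x,\langle y,z\rangle\rangle\mapsto\langle\langle x,y\rangle,z\rangle$; the components of the unitors are $x\mapsto\langle x,\star\rangle$ and $x\mapsto\langle\star,x\rangle$; open horizontal composition with an identity open natural transformation applies such a map inside exactly one coordinate of a pair and leaves the other coordinate untouched (because $\Res{(\--)}$ of an identity is an identity, so by Def.~\ref{def:open-horizontal-composition} the other coordinate passes through unchanged); and open vertical composition composes these functions in the order opposite to that of the $2$-cells, since we work in $\Set^\op$. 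For the pentagon, with open functors $F:\A\oto\B$, $G:\B\oto\C$, $H:\C\oto\D$, $I:\D\oto\E$, I would take a general element $\langle x,\langle y,\langle z,w\rangle\rangle\rangle$ of $\Alt{(((I\circ H)\circ G)\circ F)}(a)$ and trace it along the two composite $\Alt{(\--)}$-functions, using Defs.~\ref{def:vertical-composition}, \ref{def:open-horizontal-composition} and \ref{def:associator}, checking that both paths return $\langle\langle\langle x,y\rangle,z\rangle,w\rangle$. For the triangle, with $F:\A\oto\B$ and $G:\B\oto\C$ (the identity being inserted at the middle category), I would take a general element $\langle x,z\rangle$ of $\Alt{(G\circ F)}(a)$ and check that both paths return $\langle\langle x,\star\rangle,z\rangle$. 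Each is a short finite unfolding with no choices to make.

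The one step I expect to be a genuine nuisance --- the ``main obstacle'', such as it is --- is bookkeeping rather than mathematics: one must keep straight the direction of the $\op$-arrows, so that the $\Alt{(\--)}$-natural transformations compose in the reverse order of the $2$-cells, and keep straight which coordinate of a nested pair each horizontally composed structural map acts on, i.e.\ peel Def.~\ref{def:open-horizontal-composition} the correct number of times. Beyond that there is nothing conceptual: the $\Alt{(\--)}$-coherence is exactly the (strict) coherence of reassociating and unitalising finite nested tuples of sets under cartesian product with a fixed singleton as unit, and the $\Res{(\--)}$-coherence is vacuous because every structural $2$-cell has identity $\Res{(\--)}$-part. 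Collecting the previously established propositions together with these two componentwise computations then yields the bicategory.
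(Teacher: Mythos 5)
Your proposal is correct and follows essentially the same route as the paper: reduce the statement to the pentagon and triangle coherence identities (all other data and axioms having been established in the preceding propositions) and verify each identity componentwise on the $\Alt{(\--)}$ and $\Res{(\--)}$ parts, with the $\Alt{(\--)}$ part being an explicit element-chase through the rebracketing and unit-insertion maps of Definitions~\ref{def:vertical-composition}, \ref{def:open-horizontal-composition}, \ref{def:left-unitor} and~\ref{def:associator}. The only difference is one of economy: where the paper unfolds the $\Res{(\--)}$ part of every composite edge by edge and watches each reduce to an identity, you observe once and for all that $\Res{\na}$, $\Res{\nl}$, $\Res{\nr}$ and the identity cells have identity components and that vertical and horizontal composition preserve this, which is exactly what the paper's computations establish in expanded form.
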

\begin{proof}
    The only remaining tasks are to prove the coherence of the associator via the pentagon identity, and the coherence of the left and right unitors via the triangle identity.
    Let us begin by the pentagon identity, which amounts to the commutativity of the following diagram for any categories $\A$,$\B$,$\C$,$\D$, and $\E$ and any open functors $F : \A \oto \B$, $G : \B \oto \C$, $H : \C \oto \D$, $I : \D \oto \E$.
    \begin{center}
    \begin{tikzpicture}
    \node (a) at (0,3) {$I \circ (H \circ (G \circ F))$};
    \node (b) at (0,1.5) {$(I \circ H) \circ (G \circ F)$};
    \node (c) at (8,3) {$I \circ ((H \circ G) \circ F)$};
    \node (d) at (8,1.5) {$(I \circ (H \circ G)) \circ F$};
    \node (e) at (4,0) {$((I \circ H) \circ G) \circ F$};
    \draw[onattrans] (a) to node[right]{$\na[G\circ F,H,I]$} (b);
    \draw[onattrans] (b) to node[below left]{$\na[F,G,I \circ H]$} (e);
    \draw[onattrans] (a) to node[above]{$\ID{I}\circ \na[F,G,H]$} (c);
    \draw[onattrans] (c) to node[left]{$\na[F,H \circ G,I]$} (d);
    \draw[onattrans] (d) to node[below right]{$\na[I,H,G] \circ \ID{F}$} (e);
    \end{tikzpicture}
    \end{center}
    For the $\Alt{(\--)}$ components first and then for the $\Res{(\--)}$ components, let us show that both paths reduces to the same expression.
    For any $k \in A$ and $e \in \Alt{((I \circ H) \circ G) \circ F)}$ we have the following for the three step path.
    \begin{align*}
        & \Alt{((\na[G,H,I] \circ \ID{F}) \bullet \na[F,H\circ G,I] \bullet (\ID{I} \circ \na[F,G,H]))}[k]^\po(e) \\
        & = \Alt{(\na[G,H,I] \circ \ID{F})} \bullet \Alt{\na[F,H\circ G,I]} \bullet \Alt{(\ID{I} \circ \na[F,G,H])})[k]^\po(e) \text{ (by Def.~\ref{def:vertical-composition})}\\
        & = \Alt{(\ID{I} \circ \na[F,G,H])}[k]^\po \circ \Alt{\na[F,H\circ G,I]}[k]^\po \circ \Alt{(\na[G,H,I] \circ \ID{F})}[k]^\po(e)
    \end{align*}
    Now $e$ has the form $\langle w , \langle x , \langle y , z \rangle \rangle \rangle$ for some $w \in \Alt{F}$, $x \in \Alt{G}$, $y \in \Alt{H}$, $z \in \Alt{I}$.
    Let us show that the functions in the last line above simply does what we expect.
    \begin{align*}
        & \Alt{(\na[G,H,I] \circ \ID{F})}[k]^\po(\langle w , \langle x , \langle y , z \rangle \rangle \rangle) \\
        & = \langle \Alt{\ID{F}}[k]^\po(w), (\Alt{\na[G,H,I]} \circ \Res{\ID{F}})[k,w]^\po(\langle x , \langle y , z \rangle \rangle) \rangle) \text{ (by Def.~\ref{def:open-horizontal-composition})}\\
        & = \langle \Alt{\ID{F}}[k]^\po(w), \Alt{\na[G,H,I]}[\Res{\ID{F}}[k,w]]^\po(\langle x , \langle y , z \rangle \rangle) \rangle)\\
        & = \langle \Alt{\ID{F}}[k]^\po(w), \Alt{\na[G,H,I]}[\id{\Res{F}(k,w)}]^\po(\langle x , \langle y , z \rangle \rangle) \rangle) \text{ (by Def.~\ref{def:identity-open-nat-trans})}\\
        & = \langle \Alt{\ID{F}}[k]^\po(w), \Alt{\na[G,H,I]}[\Res{F}(k,w)]^\po(\langle x , \langle y , z \rangle \rangle) \rangle)\\
        & = \langle \Alt{\ID{F}}[k]^\po(w), \langle \langle x , y \rangle , z \rangle \rangle \text{ (by Def.~\ref{def:associator})}\\
        & = \langle w, \langle \langle x , y \rangle , z \rangle \rangle \text{ (by Def.~\ref{def:identity-open-nat-trans})}
    \end{align*}
    \begin{align*}
        & \Alt{\na[F,H\circ G,I]}[k]^\po(\langle w, \langle \langle x , y \rangle , z \rangle \rangle) \\
        & = \langle \langle w, \langle x , y \rangle \rangle , z \rangle \text{ (by Def.~\ref{def:associator})}
    \end{align*}
    \begin{align*}
        & \Alt{(\ID{I} \circ \na[F,G,H])}[k]^\po(\langle \langle w, \langle x , y \rangle \rangle , z \rangle) \\
        & = \langle \Alt{\na[F,G,H]}[k]^\po(\langle w, \langle x , y \rangle \rangle) , (\Alt{\ID{I}}\circ \Res{\na[F,G,H]})[k,\langle w, \langle x , y \rangle \rangle]^\po(z) \rangle \text{ (by Def.~\ref{def:open-horizontal-composition})} \\
        & = \langle \langle \langle w, x \rangle , y \rangle , (\Alt{\ID{I}}\circ \Res{\na[F,G,H]})[k,\langle w, \langle x , y \rangle \rangle]^\po(z) \rangle \text{ (by Def.~\ref{def:associator})} \\
        & = \langle \langle \langle w, x \rangle , y \rangle , \Alt{\ID{I}}[\Res{\na[F,G,H]}[k,\langle w, \langle x , y \rangle \rangle]]^\po(z) \rangle \\
        & = \langle \langle \langle w, x \rangle , y \rangle , \Alt{\ID{I}}[\id{\Res{H}(\Res{G}(\Res{F}(k,w),x),y)}]^\po(z) \rangle \text{ (by Def.~\ref{def:associator})} \\
        & = \langle \langle \langle w, x \rangle , y \rangle , \Alt{\ID{I}}[\Res{H}(\Res{G}(\Res{F}(k,w),x),y)]^\po(z) \rangle\\
        & = \langle \langle \langle w, x \rangle , y \rangle , z \rangle \text{ (by Def.~\ref{def:identity-open-nat-trans})} \\
    \end{align*}
    The $\Alt{(\--)}$ component of the two step path indeed leads to the same result.
    \begin{align*}
        & \Alt{(\na[F,G,I\circ H] \bullet \na[G\circ F,H,I])}[k]^\po(\langle w , \langle x , \langle y , z \rangle \rangle \rangle) \\
        & = (\Alt{\na[F,G,I\circ H]} \bullet \Alt{\na[G\circ F,H,I]})[k]^\po(\langle w , \langle x , \langle y , z \rangle \rangle \rangle) \text{ (by Def.~\ref{def:vertical-composition})}\\
        & = (\Alt{\na[G\circ F,H,I]}[k]^\po \circ \Alt{\na[F,G,I\circ H]}[k]^\po)(\langle w , \langle x , \langle y , z \rangle \rangle \rangle)\\
        & = \Alt{\na[G\circ F,H,I]}[k]^\po (\Alt{\na[F,G,I\circ H]}[k]^\po(\langle w , \langle x , \langle y , z \rangle \rangle \rangle))\\
        & = \Alt{\na[G\circ F,H,I]}[k]^\po (\langle \langle w , x \rangle , \langle y , z \rangle \rangle) \text{ (by Def.~\ref{def:associator})}\\
        & = \langle \langle \langle w , x \rangle , y \rangle , z \rangle \text{ (by Def.~\ref{def:associator})}
    \end{align*}
    Now let us compute the $\Res{(\--)}$ component of the three step path, step by step, using freely the previously established equations.
    \begin{align*}
        & \Res{((\na[G,H,I] \circ \ID{F}) \bullet \theta)}[k,\langle w , \langle x , \langle y , z \rangle \rangle \rangle] \text{ (and Def.~\ref{def:vertical-composition} gives)}\\
        & = (\Res{(\na[G,H,I] \circ \ID{F})} \bullet (\Res{\theta} \circ \ID{(\El{\Alt{(\na[G,H,I] \circ \ID{F})}})^\po}))[k,\langle w , \langle x , \langle y , z \rangle \rangle \rangle] \\
        & = \Res{(\na[G,H,I] \circ \ID{F})}[k,\langle w , \langle x , \langle y , z \rangle \rangle \rangle] \cdot \Res{\theta}[k,\langle w , \langle \langle x , y \rangle , z \rangle \rangle] \\
        & =  \Res{\na[G,H,I]}[\Res{\ID{F}}[k,w], \langle x , \langle y , z \rangle \rangle]] \cdot \Res{\theta}[k,\langle w , \langle \langle x , y \rangle , z \rangle \rangle] \text{ (by Def.~\ref{def:open-horizontal-composition})}\\
        & =  \Res{\na[G,H,I]}[\id{\Res{F}(k,w)}, \langle x , \langle y , z \rangle \rangle]] \cdot \Res{\theta}[k,\langle w , \langle \langle x , y \rangle , z \rangle \rangle] \text{ (by Def.~\ref{def:identity-open-nat-trans})}\\
        & =  \Res{\na[G,H,I]}[\Res{F}(k,w), \langle x , \langle y , z \rangle \rangle]] \cdot \Res{\theta}[k,\langle w , \langle \langle x , y \rangle , z \rangle \rangle] \\
        & =  \id{\Res{I}(\Res{H}(\Res{G}(\Res{F}(k,w),x),y),z)} \cdot \Res{\theta}[k,\langle w , \langle \langle x , y \rangle , z \rangle \rangle] \text{ (by Def.~\ref{def:associator})}\\
        & =  \Res{\theta}[k,\langle w , \langle \langle x , y \rangle , z \rangle \rangle]
    \end{align*}
    \begin{align*}
        & \Res{(\na[F,H\circ G,I] \bullet \theta')}[k,\langle w , \langle \langle x , y \rangle , z \rangle \rangle] \\
        & = (\Res{\na[F,H\circ G,I]} \bullet (\Res{\theta'} \circ \ID{(\El{\Alt{\na[F,H\circ G,I]}})^\po}))[k,\langle w , \langle \langle x , y \rangle , z \rangle \rangle] \text{ (by Def.~\ref{def:vertical-composition})}\\
        & = \Res{\na[F,H\circ G,I]}[k,\langle w , \langle \langle x , y \rangle , z \rangle \rangle] \cdot \Res{\theta'}[k,\langle \langle w , \langle x , y \rangle \rangle , z \rangle] \\
        & = \id{\Res{I}(\Res{(H \circ G)}(\Res{F}(k,w), \langle x , y \rangle, z)} \cdot \Res{\theta'}[k,\langle \langle w , \langle x , y \rangle \rangle , z \rangle] \text{ (by Def.~\ref{def:associator})}\\
        & = \Res{\theta'}[k,\langle \langle w , \langle x , y \rangle \rangle , z \rangle]
    \end{align*}
    \begin{align*}
        & \Res{(\ID{I} \circ \na[F,G,H])}[k,\langle \langle w , \langle x , y \rangle \rangle , z \rangle] \\
        & = \Res{\ID{I}}[\Res{\na[F,G,H]}[k,\langle w , \langle x , y \rangle \rangle], z] \text{ (by Def.~\ref{def:open-horizontal-composition})}\\
        & = \Res{\ID{I}}[\id{\Res{H}(\Res{G}(\Res{F}(k,w),x),y)], z}] \text{ (by Def.~\ref{def:associator})}\\
        & = \Res{\ID{I}}[\Res{H}(\Res{G}(\Res{F}(k,w),x),y),z] \\
        & = \id{\Res{I}(\Res{H}(\Res{G}(\Res{F}(k,w),x),y),z)} \text{ (by Def.~\ref{def:identity-open-nat-trans})}
    \end{align*}
    Combining these three equations by letting $\theta$ of the first equation be the second equation and $\theta'$ of the second equation be the third equation, we obtain the following one.
    \begin{align*}
        & \Res{((\na[G,H,I] \circ \ID{F}) \bullet (\na[F,H\circ G,I] \bullet (\ID{I} \circ \na[F,G,H])))}[k,\langle w , \langle x , \langle y , z \rangle \rangle \rangle] \\
        & =  \Res{(\na[F,H\circ G,I] \bullet (\ID{I} \circ \na[F,G,H]))}[k,\langle w , \langle \langle x , y \rangle , z \rangle \rangle] \\
        & = \Res{(\ID{I} \circ \na[F,G,H])}[k,\langle \langle w , \langle x , y \rangle \rangle , z \rangle] \\
        & = \id{\Res{I}(\Res{H}(\Res{G}(\Res{F}(k,w),x),y),z)}
    \end{align*}
    Now let us use the same strategy for the $\Res{(\--)}$ components of the two step path.
    \begin{align*}
        & \Res{(\na[F,G,I\circ H] \bullet \theta)}[k,\langle w , \langle x , \langle y , z \rangle \rangle \rangle] \\
        & = (\Res{\na[F,G,I\circ H]} \bullet (\Res{\theta} \circ \ID{(\El{\Alt{\na[F,G,I\circ H]}})^\po}))[k,\langle w , \langle x , \langle y , z \rangle \rangle \rangle] \text{ (by Def.~\ref{def:vertical-composition})}\\
        & = \Res{\na[F,G,I\circ H]}[k,\langle w , \langle x , \langle y , z \rangle \rangle \rangle] \cdot \Res{\theta}[k,\langle \langle w , x \rangle , \langle y , z \rangle \rangle] \\
        & = \id{\Res{(I \circ H)}(\Res{G}(\Res{F}(k,w), x) , \langle y, z \rangle)} \cdot \Res{\theta}[k,\langle \langle w , x \rangle , \langle y , z \rangle \rangle] \text{ (by Def.~\ref{def:associator})}\\
        & = \Res{\theta}[k,\langle \langle w , x \rangle , \langle y , z \rangle \rangle]
    \end{align*}
    \begin{align*}
        & \Res{\na[G \circ F,H,I]}[k,\langle \langle w , x \rangle , \langle y , z \rangle \rangle] \\
        & = \id{\Res{I}(\Res{H}(\Res{G}(\Res{F}(k,w),x),y),z)} \text{ (by Def.~\ref{def:associator})}
    \end{align*}
    Combining these two steps we obtain the desired equation.
    We are left with a similar task for the triangle identity, \ie the condition that the following diagram commutes.
    \begin{center}
    \begin{tikzpicture}
    \node (a) at (0,1.5) {$G \circ (\Id{\C} \circ F)$};
    \node (b) at (8,1.5) {$(G \circ \Id{\C}) \circ F$};
    \node (c) at (4,0) {$G \circ F$};
    \draw[onattrans] (a) to node[above]{$\na[F,\Id{\C},G]$} (b);
    \draw[onattrans] (a) to node[below left]{$\ID{G} \circ \nl[F]$} (c);
    \draw[onattrans] (b) to node[below right]{$\nr[G] \circ \ID{F}$} (c);
    \end{tikzpicture}
    \end{center}
    Let us start with the $\Alt{(\--)}$ components of the two-steps path.
    \begin{align*}
        & \Alt{(\nr[G] \circ \ID{F})}[k]^\po(\langle x, y \rangle) \\
        & = \langle \Alt{\ID{F}}[k]^\po(x), (\Alt{\nr[G]} \circ \Res{\ID{F}})[k,x]^\po(y) \rangle \text{ (by Def.~\ref{def:open-horizontal-composition})}\\
        & = \langle x, (\Alt{\nr[G]} \circ \Res{\ID{F}})[k,x]^\po(y) \rangle \text{ (by Def.~\ref{def:identity-open-nat-trans})}\\
        & = \langle x, \Alt{\nr[G]}[\Res{\ID{F}}[k,x]]^\po(y) \rangle \\
        & = \langle x, \Alt{\nr[G]}[\Id{\Res{F(k,x)}}]^\po(y) \rangle \text{ (by Def.~\ref{def:identity-open-nat-trans})}\\
        & = \langle x, \Alt{\nr[G]}[\Res{F(k,x)}]^\po(y) \rangle \\
        & = \langle x, \langle \star, y \rangle \rangle\text{ (by Def.~\ref{def:right-unitor})}
    \end{align*}
    \begin{align*}
        & \Alt{\na[F,\Id{\C},G]}[k]^\po(\langle x, \langle \star, y \rangle \rangle) \\
        & = \langle \langle x,  \star \rangle, y \rangle\text{ (by Def.~\ref{def:associator})}
    \end{align*}
    Now the $\Alt{(\--)}$ components of the one step path.
    \begin{align*}
        & \Alt{(\ID{G} \circ \nl[F])}[k]^\po(\langle x, y \rangle) \\
        & = \langle \Alt{\nl[F]}[k]^\po(x), (\Alt{\ID{G}}\circ\Res{\nl[F]})[k,x]^\po(y) \rangle\text{ (by Def.~\ref{def:open-horizontal-composition})} \\
        & = \langle \Alt{\nl[F]}[k]^\po(x), \Alt{\ID{G}}[\Res{\nl[F]}[k,x]]^\po(y) \rangle \\
        & = \langle \Alt{\nl[F]}[k]^\po(x), \Alt{\ID{G}}[\id{F(k,x)}]^\po(y) \rangle\text{ (by Def.~\ref{def:left-unitor})} \\
        & = \langle \Alt{\nl[F]}[k]^\po(x), \Alt{\ID{G}}[F(k,x)]^\po(y) \rangle \\
        & = \langle \Alt{\nl[F]}[k]^\po(x), y \rangle\text{ (by Def.~\ref{def:identity-open-nat-trans})}\\
        & = \langle \langle x, \star \rangle, y \rangle \text{ (by Def.~\ref{def:left-unitor})}\\
    \end{align*}
    Both paths agree on the $\Alt{(\--)}$ component.
    Now let us calculate the $\Res{(\--)}$ component of the two-steps path.
    \begin{align*}
        & \Res{((\nr[G] \circ \ID{F}) \bullet \theta)}[k,\langle x, y \rangle] \\
        & = (\Res{(\nr[G] \circ \ID{F})} \bullet (\Res{\theta} \circ \ID{(\El{\Alt{(\nr[G] \circ \ID{F})})^\po}}))[k,\langle x, y \rangle] \text{ (by Def.~\ref{def:vertical-composition})}\\
        & = \Res{(\nr[G] \circ \ID{F})}[k,\langle x, y \rangle] \cdot \Res{\theta}[k,\langle x, \langle \star, y \rangle \rangle] \\
        & = \Res{\nr[G]}[\Res{\ID{F}}[k,x], y]] \cdot \Res{\theta}[k,\langle x, \langle \star, y \rangle \rangle] \text{ (by Def.~\ref{def:open-horizontal-composition})}\\
        & = \Res{\nr[G]}[\id{\Res{F}(k,x)}, y]] \cdot \Res{\theta}[k,\langle x, \langle \star, y \rangle \rangle] \text{ (by Def.~\ref{def:identity-open-nat-trans})}\\
        & = \Res{\nr[G]}[\Res{F}(k,x), y]] \cdot \Res{\theta}[k,\langle x, \langle \star, y \rangle \rangle] \\
        & = \id{\Res{G}(\Res{F}(k,x), y)} \cdot \Res{\theta}[k,\langle x, \langle \star, y \rangle \rangle] \text{ (by Def.~\ref{def:right-unitor})}\\
        & = \Res{\theta}[k,\langle x, \langle \star, y \rangle \rangle]
    \end{align*}
    \begin{align*}
        & \Res{\na[F,\Id{\C},G]}[k,\langle x, \langle \star, y \rangle \rangle] \\
        & = \id{\Res{G}(\Res{\Id{\C}}(\Res{F}(k,x),\star),y)} \text{ (by Def.~\ref{def:associator})}\\
        & = \id{\Res{G}(\Res{F}(k,x),y)}\text{ (by Def.~\ref{def:open-identity-functor})}
    \end{align*}
    Now the one step path.
    \begin{align*}
        & \Res{(\ID{G} \circ \nl[F])}[k,\langle x, y \rangle] \\
        & = \Res{\ID{G}}[\Res{\nl[F]}[k,x],y] \text{ (by Def.~\ref{def:open-horizontal-composition})}\\
        & = \Res{\ID{G}}[\id{\Res{F}(k,x)},y] \text{ (by Def.~\ref{def:left-unitor})}\\
        & = \Res{\ID{G}}[\Res{F}(k,x),y] \\
        & = \id{\Res{G}(\Res{F}(k,x),y)} \text{ (by Def.~\ref{def:identity-open-nat-trans})}
    \end{align*}
\end{proof}

\section{Conclusion}

This report presents all the formal details of the proof that the open functors, described as directly as possible, form a bicategory.
The subsequent reports will establish that
(1) this bicategory can be presented as the Kleisli bicategory of some pseudo-monad on $\Cat$,
(2) this pseudo-monad arise from a particularly simple adjunction linking this bicategory with $\Cat$,
(3) the formal definition in this report can be made easier to manipulate by the use of discrete fibrations instead of categories of elements through the so called Grothendieck construction, and doing so presents this bicategory as a particular bicategory of spans,
(4) this bicategory is a sub-bicategory of the bicategory of profunctors (\emph{a.k.a.} distributors),
(5) many manipulations in this report can be modularized because there is a factorization system separated the $\Alt{(\--)}$ part and the $\Res{(\--)}$ part.
(6) Kan extensions in this bicategory have a particular form in terms of the underlying strict 2-category $\Cat$.
Note that the many presentations of this bicategory is strongly related to the many possible presentations one can do on the notion of ``relation'': powerset monad for (1) and (2) above, spans for (3), and characteristic functions of the relation for (4).
These results are those that are already drafted and only need to be cleaned up and made available.

\bibliography{main}

\end{document}